\def\const{\text{\rm const}}
\def\sm{\setminus}
\def\ti{\tilde}
\def\PW{\text{\rm PW}}
\def\dist{\text{\rm dist}}
\def\supp{\text{\rm supp}\,}
\def\to{\rightarrow}
\def\bs{\bigskip}
\def\no{\noindent}
\def\R{{\mathbb R}}
\def\Z{{\mathbb{Z}}}
\def\C{{\mathbb{C}}}
\def\DD{{\mathcal{D}}}
\def\MM{{\mathcal M}}
\def\GG{{{{\mathbf{G}}}}}
\def\EE{{\mathcal E}}
\def\e{\varepsilon}
\def\L{\Lambda}
\def\l{\lambda}
\def\G{\Gamma}
\def\lan{\lambda_n}
\def\gan{\gamma_n}
\def\const{\text{\rm const}}
\def\dist{\text{\rm dist}}
\def\supp{\text{\rm supp}\,}
\def\PWa{\mathcal{P}\mathcal{W}_a}
\theoremstyle{plain}
\newtheorem{lemma}{Lemma}
\newtheorem{theorem}{Theorem}
\newtheorem{corollary}{Corollary}
\newtheorem{claim}{Claim}
\newtheorem{definition}{Definition}
\numberwithin{equation}{section}
\author{A.~Poltoratski}
\address{Texas A\&M University
\\ Department of Mathematics\\
College Station, TX 77843, USA\\
and \\
Department of Mathematics and Mechanics\\ St. Petersburg State University\\ St. Petersburg, Russia }
\email{alexeip@math.tamu.edu}
\thanks{Initial  work on this article (Sections 1-5) was supported by
NSF Grant DMS-1665264. The article was completed (Sections 6-9) with support of RNF grant 14-41-00010}
\title{Type alternative for Frostman measures}
\begin{document}

\begin{abstract} For a finite positive Borel measure $\mu$ on $\R$ its exponential type, $T_\mu$, is
defined as the infimum of  $a>0$ such that finite linear combinations of complex exponentials with
frequencies between 0 and $a$ are dense in $L^2(\mu)$. The definition can be easily extended from finite to 
broader classes of measures. In this paper we prove a new
formula for $T_\mu$ and use it to study growth and additivity properties of measures with finite positive type.
As one of the applications, we show that Frostman measures on $\R$ may only have type zero or infinity.
\end{abstract}

\maketitle

\section{Introduction}\label{sInt}

For $a>0$ denote by $\EE_a$ the family of complex exponential functions on $\R$ with frequencies between $0$ and $a$:
$$\EE_a=\{e^{isx}|\ s\in [0,a]\}.$$
If $\mu$ is a positive finite measure on $\R$ its exponential type $T_\mu$ is defined as
$$T_\mu=\inf\{a|\ \EE_a \textrm{ is complete in }L^2(\mu)\},$$
if the set on the right-hand side is non-empty and as infinity otherwise.
Recall that a family of vectors in a Banach space is complete in the space if finite linear combinations of
vectors from the family are dense in the space. The type problem, the problem of finding $T_\mu$ in terms of $\mu$,
has many connections in Fourier analysis, spectral analysis of differential operators and related fields, see for instance \cite{BS, Krein1, Krein2, Krein3, Khabibullin, Koosis, Type, CBMS}.
A formula for $T_\mu$ was recently obtained in \cite{Type} and further developed in \cite{CBMS}. One of the implications
of these results is that $L^2$ in the above definition of $T_\mu$ may be replaced with any $L^p,\ p>1,$ without changing the value
of $T_\mu$, i.e., that the $p$-type of $\mu$ for any $p>1$ is equal to the $2$-type. 

Although the type formula from \cite{Type} has improved  existing results and gave new examples in the area of the type problem, some
of the natural questions remained open. This note originated from one of such questions. Consider  the Poisson measure $\Pi$ on the real line, $$d\Pi(x)=\frac{dx}{1+x^2}.$$
Its type $T_\Pi$ can be easily shown to be equal to infinity.  On the opposite end of the scale, if one considers any measure with 'long' gaps in its
support (see Section \ref{sGap}), the type is equal to zero, as follows from another classical result, Beurling's gap theorem. 
The simplest
examples of measures of zero type are measures with semi-bounded (or bouneded) support, for instance, a restriction of $\Pi$ to a half-line.
The sharpness of
Beurling's theorem was demonstrated by M. Benedicks in \cite{Benedicks} where further examples of absolutely continuous measures of zero and infinite type
were constructed. Examples from \cite{Benedicks} focused on restrictions of the Poisson measure to unions of unit intervals. 

To obtain an example of an absolutely continuous measure with finite positive type one needs to utilize further results on the type problem.
Define the density $f$ to be equal to $e^{|n|}/(1+n^2)$ on each interval $[n,n+e^{-|n|}),\ n\in\Z$, and to zero elsewhere. Then the type of
$\mu,\ d\mu(x)=f(x)dx$, is equal to $2\pi$, as follows from the results of Borichev and Sodin  \cite{BS} or the results of \cite{Type}.

Notice that in the last example the density is extremely unbounded, while in the examples with more regular  densities the type always
comes out to be zero or infinity. This pattern persists over all known examples giving raise to the following natural question:
Can an absolutely continuous measure with bounded density have finite positive type?

In this note we give a negative answer to this question extending the result to a slightly wider class of measures. A positive measure $\mu$ on
$\R$ is Poisson-finite if
$$\int\frac{d\mu(x)}{1+x^2}<\infty.$$
The definition of type given above can be easily extended from finite to Poisson-finite and even wider classes of measures, see Section \ref{sPrelim}.

A positive measure $\mu$ on $\R$ is a Frostman measure if there exist positive constants $C$ and $\alpha$ such that for any interval $I\subset\R$,
\begin{equation}\mu(I)<C|I|^\alpha,
\label{eF}\end{equation}
where $|I|$ denotes the length of $I$. It is not difficult to see that every Frostman measure is Poisson-finite. Any absolutely continuous measure $\mu, \ d\mu(x)=f(x)dx$ with bounded density is a Frostman measure. Moreover, if
$f\in L^p(\R),\ 1<p< \infty$ then the measure is Frostman since
$$\mu(I)\leqslant ||f||_p|I|^{1/p}.$$

A measure is doubling if there exists a constant $C>0$ such that
$$\mu((x-2r,x+2r))<C\mu((x-r,x+r))$$
for any $x\in \R$ and any $r>0$. Under an additional restriction that $\mu(I)<D$ for all intervals $I$ of fixed (unit) length and some $D>0$,
a doubling measure is a Frostman measure.  Among singular measures, a standard Cantor measure on $[0,1]$ periodically extended to the rest of $\R$
is a relevant example of a  singular Frostman measure.

As we show in Theorem \ref{tFrost}, Section \ref{sTypeF}, the type of any Frostman measure can only be equal to zero or infinity. Our result relates local properties of a measure to its type, whereas previously known results only explored the relations between the type
and asymptotic properties of the measure near infinity.

To prove Theorem \ref{tFrost} we first  obtain a new version of the type formula in Section \ref{sTypeL}, Theorem \ref{tLp}. Unlike the previous versions, the formula
does not include a $\mu$-finite weight inherited from Bernstein's version of the type problem, see Section \ref{sTypeB}. Such an improvement
may be useful in applications, such as the problems discussed here.

If a measure with bounded  density cannot have a finite positive type, it is natural to ask  how fast the density
of a measure of such a type must grow. We give an answer to this question in Section \ref{sGrowth}, Theorem \ref{tGrowth}. In Section \ref{sAdd} we investigate
additivity properties of type.

\section{Preliminaries}\label{sPrelim}

 If $f$ is a function from $L^1(\R)$ we denote by $\hat f$ its Fourier transform

\begin{equation}\hat f(z)=\int_\R f(t)e^{- i z t}dt.\label{FTdef}\end{equation}

 Let $M$ be a set of all finite Borel complex measures on the real line. Similarly, for  $\mu\in M$ we define

$$\hat \mu(z)=\int_\R e^{- i z t}d\mu(t).$$

  Via Parseval's theorem, the Fourier transform may be extended to be a unitary operator from $L^2(\R)$ onto itself
and can be defined for even broader classes of distributions.

If one extends all functions from $L^2(-a,a)$ as $0$ to the rest of the line, one can apply the Fourier transform
to all such functions and obtain the Paley-Wiener space of entire functions
$$\PWa=\{\hat f|\ f\in L^2(-a,a)\}.$$


The definition of type $T_\mu$, given in the introduction for finite measures $\mu$, can be naturally extended to broader classes
of measures on $\R$:
$$T_\mu=2\inf\{a|\ \PWa\cap L^2(\mu) \textrm{ is dense in }L^2(\mu)\},$$
if the set of such $a$ is non-empty and as infinity otherwise. 
Via this definition one can consider the type problem in
the class of polynomially growing measures, i.e., measures $\nu$ such that $\nu=(1+|x|^n)\mu$ for some $n>0$ and some finite measure $\mu$.
Poisson-finite measures are polynomially growing measures with $n=2$.
We will denote the set of all polynomially growing Borel complex measures on the real line by $M_p$. The notation $M^+_p$ will be used
for the subset of positive measures.

Note that by duality, a family $F$ of functions is not dense in $L^q(\mu),\ q>1,$ iff there exists $$f\in L^p(\mu),\ \frac 1p +\frac 1q =1,$$ annihilating
the family, i.e., such that $$\int f\bar g d\mu=0$$ for every $g$ from the family. In such cases we write $f\perp F$.
We denote by $T^p_\mu$ the $p$-type of $\mu$ defined as
$$T^p_\mu=2\inf\{a|\ \exists f\in L^p(\mu),\ f\not\equiv 0, \ f\perp(\PWa\cap L^q(\mu)) \},$$
Note that $T_\mu=T^2_\mu$. Cases $p\neq 2$ were considered in several papers, see for instance articles by Koosis \cite{Koosis2} or Levin \cite{Levin2} for the case $p=\infty$. 

 It is well-known that if $\nu=(1+|x|^n)\mu$ then
$T^p_\nu=T^p_\mu$, which reduces the problem for polynomially growing measures to its original settings of finite measures.
The equality of $T^p_\mu,\ p>1$ to $T_\mu$ remains true for polynomially growing measures.

Suppose that $T^p_\mu>a$ for some finite measure $\mu, \ p>1$ and $a>0$. By our definitions this means that  there exists a function $$f\in L^q(\mu),\ \frac 1p + \frac 1q =1,$$  which annihilates $\EE_a$, i.e., such that
$$\hat f(s)=\int e^{-isx}f(x)dx=0\textrm{ for all }s\in [0,a].$$
An alternative way to extend the definition of type is to say that $T_\mu\geqslant a$ if $\widehat{f\mu}$, understood in the sense of distributions for
infinite measures, has no support on $[0,a]$ for some $f\in L^p(\mu),\ f\not\equiv 0$.

Hence the type problem becomes a version of the gap problem, which studies measures whose Fourier transform has a non-trivial gap in its support.
As we can see, to find $T^p_\mu$ is the same as to find what non-zero density $f\in L^q(\mu)$ gives the largest spectral gap for the
measure $f\mu$. It turns out that to approach the type problem it is beneficial to first solve the gap problem in the case $q=1$, which is no longer
a dual version of a $p$-type problem.

\section{Gap formula}\label{sGap}

We say that a polynomially growing measure $\mu\in M_p$ annihilates a Paley-Wiener space $\PWa$, and write $\mu\perp \PW_a$
 if for all functions $f\in \PW_a\cap L^1(|\mu|)$,
 $$\int f d\mu =0.$$
 Note that $\PWa$ contains a dense set of fast-decreasing functions belonging to $L^1(|\mu|)$ for any polynomially growing $\mu$.
 If $X$ is a closed subset of the real line we denote by $M_p(X)$ the set of polynomially growing measures supported on $X$.
 We denote by
 $\GG_X$ the
gap characteristic of $X$ defined as
\begin{equation} \GG_X=2\sup\{a\ |\ \exists\ \mu\in M_p(X) ,\ \mu\not\equiv 0,\text{ such that}\ \mu\perp\PWa\}.
\end{equation}
when the set is non-empty and zero otherwise.
Alternatively, one could define $\GG_X$ as the supremum of the size of the gap in the support of $\hat\mu$, taken over all non-zero finite
measures $\mu$ supported on $X$ (which explains the name).

The problem  of finding $\GG_X$ in terms of $X$ has many connections and applications, see for instance \cite{MiPo, CBMS} for results and further references. It was recently solved in \cite{GAP}. This version of
the gap problem is related to the version mentioned in the last section via the following observation.

\begin{lemma}[\cite{GAP, CBMS}]
For $\mu\in M^+_p$,
$$T^1_\mu=\GG_{\supp\mu}.$$
\end{lemma}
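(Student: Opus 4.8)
The plan is to recast both quantities as the supremal size of a spectral gap of $\hat\sigma$ as $\sigma$ ranges over two competing classes of measures on $X=\supp\mu$, and then to compare the classes. First I would record the elementary dictionary: for a finite complex measure $\sigma$ one has $\sigma\perp\PWa$ if and only if $\hat\sigma\equiv 0$ on $[-a,a]$. The implication $\Leftarrow$ follows by Fubini, since for $g=\hat\psi\in\PWa\cap L^1(|\sigma|)$ with $\psi\in L^2(-a,a)$ one has $\int g\,d\sigma=\int_{-a}^a\psi(t)\hat\sigma(t)\,dt$, the interchange being justified because $\psi\in L^1(-a,a)$ and $\sigma$ is finite. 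For $\Rightarrow$ I would test against the subclass $g=\hat\phi$ with $\phi\in C_c^\infty(-a,a)$, whose members are Schwartz, hence bounded on $\R$ and lying in $\PWa\cap L^1(|\sigma|)$, and whose preimages are dense in $L^2(-a,a)$; then $\int_{-a}^a\phi\,\hat\sigma=0$ for all such $\phi$ forces $\hat\sigma\equiv 0$ on $(-a,a)$, hence on $[-a,a]$ by continuity.

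Next I would translate the type condition into the same language. Using that $\PWa$ is invariant under complex conjugation (if $g=\hat\psi$ then $\bar g=\widehat{\psi^*}$ with $\psi^*(t)=\overline{\psi(-t)}\in L^2(-a,a)$), the relation $f\perp(\PWa\cap L^\infty(\mu))$ is equivalent to $\int g\,d\nu=0$ for all $g\in\PWa\cap L^\infty(\mu)$, where $\nu:=f\mu$. Since $f\in L^1(\mu)$ the measure $\nu$ is finite and supported on $X$, and the bounded Schwartz members of $\PWa$ lie in $L^\infty(\mu)\cap L^1(|\nu|)$, so the dictionary gives $f\perp(\PWa\cap L^\infty(\mu))\iff \hat\nu\equiv0$ on $[-a,a]$. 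These conditions are monotone in $a$ (they descend to all smaller $a$), so both $T^1_\mu$ and $\GG_X$ equal twice a supremal threshold, and the claim reduces to matching the two thresholds. One inequality is now immediate: any $f\in L^1(\mu)$, $f\not\equiv0$, producing a gap yields $\nu=f\mu\in M_p(X)$ with $\nu\not\equiv0$ (here positivity of $\mu$ is used) and the same gap, so every competitor for $T^1_\mu$ is a competitor for $\GG_X$, whence $T^1_\mu\le\GG_X$.

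The substance is the reverse inequality, namely that enlarging the competing class from measures $f\mu$, $f\in L^1(\mu)$, to \emph{all} polynomially growing measures on $X$ does not increase the attainable gap. Here I would first reduce to a finite $\nu\not\equiv0$ on $X$ with $\hat\nu\equiv0$ on $[-a,a]$, invoking that multiplication by $(1+|x|^n)$ leaves both $\supp\nu$ and the relevant characteristic unchanged, and then aim, for each $\e>0$, to build $f\in L^1(\mu)$, $f\not\equiv0$, with $\widehat{f\mu}\equiv0$ on $[-(a-\e),a-\e]$, letting $\e\to0$. I expect this to be the main obstacle, since it is not reachable by soft duality: the singular part of $\nu$ with respect to $\mu$ is invisible to the $L^1(\mu)$–$L^\infty(\mu)$ pairing, so a general annihilating $\nu$ does not control the weak-$*$ closure of $\span\{e^{-ixt}:t\in[-a,a]\}$ in $L^\infty(\mu)$. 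My approach would be approximative: regularize $\nu$ and re-sample it against $\mu$ to replace it by a $\mu$-absolutely continuous measure whose transform still vanishes on a slightly shorter interval, tracking the loss $\e$. That such a replacement is possible for \emph{every} positive $\mu$ with $\supp\mu=X$ is precisely the content of the solution of the gap problem in \cite{GAP}: the extremal gap depends on $X$ alone, and near-extremal gaps are realizable by densities against any positive measure of full support on $X$. Invoking that result supplies the required $f$ and yields $\GG_{\supp\mu}\le T^1_\mu$, completing the identity.
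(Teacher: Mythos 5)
Your proposal is correct and in substance coincides with the paper's own treatment: the paper offers no proof of this lemma, importing it wholesale from \cite{GAP, CBMS}, and your additions around that citation --- the Fubini dictionary identifying both $f\perp(\PWa\cap L^\infty(\mu))$ (for $f\in L^1(\mu)$, using conjugation-invariance of $\PWa$) and $\nu\perp\PWa$ (for the finite measure $\nu=f\mu$) with the vanishing of $\hat\nu$ on $[-a,a]$, hence the inequality $T^1_\mu\leqslant\GG_{\supp\mu}$ --- are correct bookkeeping that the paper leaves implicit. For the reverse inequality you, like the paper, defer to \cite{GAP}; be aware that what you invoke there is not the set formula (Theorem \ref{MAINGAP}) but precisely the ``measures'' statement of \cite{GAP} (near-extremal gaps are realizable by $L^1(\mu)$-densities against any positive measure with the given support), i.e.\ materially the hard half of this very lemma, so your argument is a legitimate reduction to the literature rather than a self-contained proof; deriving it from Theorem \ref{MAINGAP} alone would additionally require the perturbation step that converts a $d$-uniform sequence in $\supp\mu$ into an annihilating $L^1(\mu)$-density (replacing points by nearby packets of $\mu$-mass, in the spirit of the converse direction of the proof of Theorem \ref{tLp}), and also a genuine smearing argument for the passage from $M_p(X)$ to finite measures, since dividing a measure by $(1+|x|^n)$ does \emph{not} literally preserve a spectral gap (on the Fourier side it only turns vanishing into a homogeneous ODE), contrary to what your reduction step suggests.
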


As we discussed in the last section, for $p>1$ the type $T^p_\mu=T_\mu$ is different from  $T^1_\mu$. Further formulas for the type will be discussed in the next two sections.

To give the formula for $\GG_X$  \cite{GAP} we will need the following definitions.

A sequence of disjoint intervals $\{I_n\}$ on the real line is called long (in the sense of Beurling and Malliavin)
if
\begin{equation}\sum_n\frac{|I_n|^2}{1+\dist^2(0,I_n)}=\infty\label{long}
\end{equation}
where $|I_n|$ stands for the length of $I_n$.
If the sum is finite we call $\{I_n\}$ short.

Let
$$...<a_{-2}<a_{-1}<a_0=0<a_1<a_2<...$$
\no be a two-sided sequence of real points. We say that the intervals $I_n=(a_n,a_{n+1}]$ form a short partition
of $\R$ if $|I_n|\to\infty$  as $ n\to \pm\infty$ and the sequence $\{I_n\}$ is short, i.e. the
sum in
\eqref{long} is finite.

Let $\Lambda=\{\lambda_1,...,\lambda_n\}$ be a finite set of points on $\R$. Consider the quantity

\begin{equation}E(\Lambda)=\sum_{\lambda_k,\lambda_j\in\L,\ \lambda_k\neq\lambda_j} \log|\lambda_k-\lambda_l|.\label{electrons}\end{equation}

As usual, a sequence of points $\L=\{\lan\}\subset\C$ is called discrete if it has no finite accumulation points.


\bs

Now we are ready to give the definition of $d$-uniform sequences used in our type formulas.

Let $\L=\{\lan\}$ be a discrete sequence of distinct real points and let $d$ be a positive number.
We say that  $\L$ is a $d$-uniform sequence if
there exists a short partition $\{I_n\}$ such that
\begin{equation}  \Delta_n=\#(\L\cap I_n)= d|I_n|+o(|I_n|)\   \text{as}   \ n\to\pm\infty \ \textrm{(density condition)}\label{density}\end{equation}
\no and
\begin{equation}  \sum_n \frac{\Delta_n^2\log|I_n|-E_n}{1+\dist^2(0,I_n)}<\infty\ \ \ \ \ \textrm{(energy/work condition)},\label{energy}\end{equation}
\no where

 $$E_n=E(\Lambda\cap I_n).$$

\bs

The following formula for the gap characteristic of a closed set was obtained in \cite{GAP}, see also \cite{CBMS}.

\begin{theorem}\label{MAINGAP}

$$\GG_X=2\pi\sup\{ d  |\ X\textrm{ contains a $d$-uniform sequence}\},$$

if the set on the right is non-empty and $\GG_X=0$ otherwise.

\end{theorem}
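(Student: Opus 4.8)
The plan is to prove the two inequalities in Theorem~\ref{MAINGAP} separately, reducing the gap problem to a question about the Cauchy transform $C\mu(z)=\int d\mu(t)/(t-z)$ and to the existence of entire, resp.\ meromorphic inner, functions of a prescribed exponential type. The governing principle throughout is the classical correspondence ``exponential type $=\pi\times$(zero density)'': the density condition \eqref{density} fixes the type, while the energy condition \eqref{energy} supplies the regularity needed to realize the relevant function. Since modulating a measure by $e^{icx}$ translates $\supp\hat\mu$, there is no loss in arranging every gap to be the symmetric interval $(-a,a)$, and a direct computation gives, for $\mathrm{Im}\,z>0$, the representation $C\mu(z)=-i\int_0^\infty e^{-isz}\hat\mu(-s)\,ds$, which converts statements about the gap of $\hat\mu$ into statements about the exponential growth/decay of $C\mu$ in the two half-planes.

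\emph{Sufficiency} ($\GG_X\ge 2\pi d$ whenever $X$ contains a $d$-uniform sequence). Given a $d$-uniform $\Lambda=\{\lambda_n\}\subset X$, I would build a nonzero discrete measure $\nu=\sum_n c_n\delta_{\lambda_n}$, supported on $\Lambda\subset X$, with $\hat\nu\equiv 0$ on $(-\pi d,\pi d)$; by the definition of $\GG_X$ this yields $\GG_X\ge 2\pi d$ directly. The device is the canonical product $G$ over $\Lambda$. The density condition \eqref{density} forces $G$ to have exponential type exactly $\pi d$, and the energy condition \eqref{energy} is precisely the regularity — the defect $\Delta_n^2\log|I_n|-E_n$ measuring how far the points of $\Lambda\cap I_n$ fall short of being maximally spread in $I_n$ — that makes $\log|G|$ admit a harmonic majorant, so that $1/G$ is of Smirnov class in each half-plane. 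Setting $c_n=1/G'(\lambda_n)$ and using the partial-fraction expansion $1/G(z)=\sum_n c_n/(z-\lambda_n)$ identifies $-1/G$ with $C\nu$; the growth rate $|1/G(z)|\asymp e^{-\pi d\,|\mathrm{Im}\,z|}$ then places $\supp\hat\nu$ outside $(-\pi d,\pi d)$ via the representation above, and \eqref{energy} simultaneously guarantees $\nu\not\equiv0$ and $\nu\in M_p$.

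\emph{Necessity} ($X$ contains a $d$-uniform sequence for every $d<\GG_X/2\pi$). Suppose $\mu\in M_p(X)$, $\mu\not\equiv0$, with $\hat\mu=0$ on $(-a,a)$. The representation above shows that $e^{iaz}C\mu(z)$ lies in the Smirnov class of the upper half-plane $\H$ (and $e^{-iaz}C\mu$ in the lower one), so $C\mu$ is of bounded type with the singular factor $e^{iaz}$ carried out. From the inner–outer factorization of $C\mu$ I would extract a meromorphic inner function $\Th$ in $\H$ of exponential type $a=\pi d$, whose singularities, hence whose spectrum $\{\Th=1\}$, are attached to $\supp\mu\subset X$. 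This spectrum is a real sequence of density $a/\pi$; deleting a short subsequence, the remaining points should lie in $\clos\supp\mu\subset X$ and satisfy both \eqref{density} (density $d=a/\pi$) and \eqref{energy}, the latter coming from the genuine integrability of $\log|\cdot|$ forced by the polynomial growth of $\mu$. Absorbing the deleted short part into the $o(|I_n|)$ and the convergent-sum errors of \eqref{density}--\eqref{energy} then exhibits a $d$-uniform sequence in $X$. (Equivalently, this direction can be framed through the (non)triviality of the Toeplitz kernel associated with $e^{iaz}$ and $\Th$.)

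\emph{Main obstacle.} In both directions the genuine difficulty is the energy condition, not the density. For sufficiency, producing $G$ of type \emph{exactly} $\pi d$ with $1/G$ Smirnov, and trimming the type by an arbitrarily small amount without destroying the summability in \eqref{energy}, requires a Beurling--Malliavin multiplier; the quantity $\Delta_n^2\log|I_n|-E_n$ is exactly the budget such a multiplier is allowed to spend. For necessity, the crux is to show that the spectrum of the inner function built from a polynomially growing measure cannot cluster more than a short set permits, so that a subsequence with \emph{finite} energy defect $\sum(\Delta_n^2\log|I_n|-E_n)/(1+\dist^2(0,I_n))$ actually survives inside $X$. I expect essentially all of the work to reside in this energy bookkeeping, carried out within the short-partition formalism, rather than in the comparatively soft density count.
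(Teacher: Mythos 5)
You should first be aware that the paper does not prove Theorem \ref{MAINGAP} at all: it is quoted from \cite{GAP} (see also \cite{CBMS}), so there is no internal proof to compare against, and your proposal must be judged against the argument of \cite{GAP} -- whose general framework (Cauchy transforms, canonical products, Toeplitz kernels, Beurling--Malliavin multipliers, energy estimates on short partitions) your outline does correctly identify. The problem is that what you wrote is a roadmap, not a proof: in both directions, the steps you assert without argument are exactly where the theorem lives. In the sufficiency half, the three claims that (i) the canonical product $G$ over $\L$ admits the expansion $1/G(z)=\sum_n (z-\lan)^{-1}/G'(\lan)$, (ii) the measure $\nu=\sum_n G'(\lan)^{-1}\delta_{\lan}$ is polynomially growing, and (iii) $1/G$ is of Smirnov class in each half-plane, all hinge on converting the condition \eqref{energy} -- a summability statement about the averaged quantities $\Delta_n^2\log|I_n|-E_n$ over a short partition -- into pointwise lower bounds on $|G'(\lan)|$ and into finiteness of the logarithmic integral of $1/|G|$. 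That conversion is the electrostatic/Dirichlet-norm machinery (of the same kind as the Claim in Section \ref{sLem} of this paper), and it is nowhere carried out; without it, neither $\nu\in M_p$ nor the decay $|1/G(z)|\asymp e^{-\pi d|{\rm Im}\,z|}$ is available, and $\GG_X\geqslant 2\pi d$ does not follow.

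The necessity half has the more serious structural gap. From $\hat\mu=0$ on $(-a,a)$ one does get $C\mu=e^{iaz}h_+$ with $h_+$ of bounded type in the upper half-plane (note a sign slip: with the paper's convention $\hat\mu(z)=\int e^{-izt}d\mu(t)$ the correct representation is $C\mu(z)=i\int_0^\infty e^{isz}\hat\mu(s)\,ds$ for ${\rm Im}\,z>0$; your kernel $e^{-isz}$ grows there). But the inner factor of $C\mu$ need not be meromorphic and need not have exponential type exactly $a$; and even granting a meromorphic inner $\Th$ of type $a$, there is no a priori reason that its level set $\{\Th=1\}$ lies in $\supp\mu\subset X$ -- the theorem demands a $d$-uniform sequence \emph{inside} $X$, not merely near it -- nor any argument that this level set satisfies \eqref{energy}. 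Localizing the sequence in $\supp\mu$ and verifying the energy condition for it constitute the bulk of the proof in \cite{GAP}; your outline defers both to ``energy bookkeeping'' that is never performed. (A small terminological point: ``short'' applies to sequences of intervals, not to subsequences of points, so ``deleting a short subsequence'' needs to be restated, e.g.\ as deleting a subsequence of zero density.) In sum: the plan is consistent with the known proof strategy, but it has genuine gaps at precisely the decisive steps in both directions.
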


\section{Type formula in Bernstein's settings}\label{sTypeB}

We first approach the type problem in the settings of Bernstein's
weighted uniform approximation.

  Consider a weight $W$, i.e. a lower semicontinuous function $W:\R\to [1,\infty]$
that tends to $\infty$ as $x\to\pm\infty$. The space $C_W$ is the space of all continuous functions on $\R$ satisfying
$$\lim_{x\to\pm\infty} \frac{f(x)}{W(x)}=0.$$
We define the semi-norm in $C_W$ as
$$||f||_W=|| fW^{-1}||_\infty.$$
Finding conditions on $W$ ensuring completeness of polynomials or exponentials in $C_W$ is a classical problem, see for instance
\cite{Bernstein, Lub, Mergelyan, CBMS} for further discussion and references.

For a weight $W$ we define
$$\GG_W=\inf \{a|\ \EE_a\textrm{ is complete in }C_W\}.$$
We put $\GG_W=0$ if the last set is empty.

\begin{theorem}[\cite{CBMS}]\label{tBmain}
$$\GG_W=2\pi\sup \left\{d\ |\  \sum\frac{\log W(\lan)}{1+\lan^2}<\infty\textrm{ for some $d$-uniform sequence }\L \right\},$$
if the set is non-empty, and $0$ otherwise.
\end{theorem}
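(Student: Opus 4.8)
The plan is to dualize after a harmless symmetrization. Multiplication by the unimodular factor $e^{-iax/2}$ replaces $\EE_a$ by the symmetric family $\{e^{isx}\mid |s|\leqslant a/2\}$ without affecting either the seminorm of $C_W$ or completeness, so $\GG_W=2\inf\{b\mid \{e^{isx}\mid|s|\leqslant b\}\ \text{is complete in}\ C_W\}$. The substitution $f\mapsto f/W$ is an isometry of $C_W$ onto $C_0(\R)$, so the dual of $C_W$ consists exactly of the complex Borel measures $\mu$ with $\int W\,d|\mu|<\infty$, acting by $f\mapsto\int f\,d\mu$. Symmetric completeness therefore fails precisely when some nonzero such $\mu$ has $\hat\mu$ vanishing on $(-b,b)$, i.e. $\mu\perp\PW_b$, whence
\[
\GG_W=2\sup\{b\mid\exists\,\mu\neq 0,\ \int W\,d|\mu|<\infty,\ \mu\perp\PW_b\}.
\]
This is the gap characteristic computed by Theorem \ref{MAINGAP}, except that the hard support constraint that $\mu$ live on a fixed set $X$ is replaced by the soft weighted-mass constraint. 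The new content of Theorem \ref{tBmain} is that the weight turns the admissibility condition $\L\subset X$ into $\sum\frac{\log W(\lan)}{1+\lan^2}<\infty$, the discrete analogue of the classical logarithmic integral $\int\frac{\log W(x)}{1+x^2}\,dx$ that governs weighted approximation.

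For the lower bound $\GG_W\geqslant 2\pi d$ I would begin with a $d$-uniform sequence $\L=\{\lan\}$ for which the logarithmic sum converges. The density condition \eqref{density} lets me build an entire function $G$ of exponential type $\pi d$ vanishing exactly on $\L$, while the energy condition \eqref{energy} controls $\log|G'(\lan)|$ and endows $\log|G|$ with a Poisson representation, so that the residue measure $\sum_n \frac{1}{G'(\lan)}\delta_{\lan}$ of $1/G$ annihilates $\PW_b$ for every $b<\pi d$. To install the weight I would invoke a discrete Beurling--Malliavin--type multiplier: convergence of $\sum\frac{\log W(\lan)}{1+\lan^2}$ is exactly the condition for the existence of an entire function $\Phi$ of arbitrarily small exponential type with $|\Phi(\lan)|\geqslant (1+\lan^2)\,W(\lan)/|G'(\lan)|$, the contributions of the polynomial factor and of $\log|G'(\lan)|$ to the defining log-sum being convergent by \eqref{energy}. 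Incorporating $1/\Phi$ into the coefficients costs a negligible amount of type, so $\mu=\sum_n \frac{1}{G'(\lan)\Phi(\lan)}\delta_{\lan}$ still annihilates $\PW_b$ for $b$ arbitrarily close to $\pi d$ while $\int W\,d|\mu|\lesssim\sum_n \frac{1}{1+\lan^2}<\infty$. Thus $\mu$ witnesses incompleteness and $\GG_W\geqslant 2\pi d$.

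For the upper bound $\GG_W\leqslant 2\pi d$ I would reverse the construction. Given an annihilating $\mu\neq 0$ with $\int W\,d|\mu|<\infty$ and $\mu\perp\PW_b$, the identity $T^1_\mu=\GG_{\supp\mu}$ together with Theorem \ref{MAINGAP} produces a $d$-uniform sequence $\L\subset\supp\mu$ with $\pi d\geqslant b$. It remains to see that finiteness of the weighted mass forces $\sum\frac{\log W(\lan)}{1+\lan^2}<\infty$: encoding $\mu$ through the entire function of exponential type $b$ generated by the gap, the bound $\int W\,d|\mu|<\infty$ exhibits a majorant of $W$ along $\L$ of type at most $b$, and by the multiplier equivalence the existence of such a majorant forces the logarithmic sum to converge. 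Taking the supremum over admissible $d$ yields the reverse inequality, and the two bounds give the stated formula.

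The step I expect to be the main obstacle is the multiplier equivalence underlying both directions: that $\sum\frac{\log W(\lan)}{1+\lan^2}<\infty$ is equivalent to the existence of a small-type entire majorant of $W$ along the $d$-uniform sequence, established compatibly with \eqref{density}--\eqref{energy} so that inserting the majorant alters neither the type of $G$ nor the size of the gap. Interlaced with this is the bookkeeping converting $\log|G'(\lan)|$ into the energy $E_n$ of \eqref{electrons}; it is the pairing of the Poisson kernel $\frac{1}{1+\lan^2}$ with $\log W$ that singles out the logarithmic sum as the correct invariant, exactly as in the classical integral $\int\frac{\log W(x)}{1+x^2}\,dx$.
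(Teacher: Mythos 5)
The paper does not actually prove Theorem \ref{tBmain}; it is imported from \cite{CBMS}, so your attempt can only be measured against the argument there, which builds the weight into the gap-theorem machinery from the start rather than bolting it on afterwards. Your opening reductions are sound in substance: the symmetrization is harmless, and identifying the dual of $C_W$ with the measures satisfying $\int W\,d|\mu|<\infty$ is the standard dual formulation of Bernstein's problem (though $f\mapsto f/W$ is not literally an isometry onto $C_0(\R)$, since $W$ is only lower semicontinuous and may take the value $+\infty$). The fatal gap is in your lower bound. The measure $\mu=\sum_n\frac{1}{G'(\lan)\Phi(\lan)}\,\delta_{\lan}$ is never shown to annihilate $\PW_b$, and it cannot be shown by the route you indicate: the residue argument gives annihilation for the \emph{full} residue measure of $1/(G\Phi)$, which carries atoms at the zeros of $\Phi$ as well. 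Those zeros are unavoidable, because a zero-free entire function of exponential type has the form $e^{az+b}$, whose modulus on $\R$ cannot dominate $(1+\lan^2)W(\lan)\geqslant 1+\lan^2$ on a two-sided sequence of positive density. At the zeros of $\Phi$ the weight $W$ is completely uncontrolled (a weight may be $+\infty$ off $\L$), so these extra atoms can neither be kept (the weighted mass may diverge) nor dropped (dropping atoms destroys the spectral gap). The standard repair goes in the opposite direction: keep the unweighted gap measure $\nu$ on $\L$ and multiply it by a small-type entire function $g$ that is \emph{small} on $\L$, $|g(\lan)|\leqslant((1+\lan^2)W(\lan))^{-1}$, so that $\widehat{g\nu}=\frac1{2\pi}\hat g*\hat\nu$ retains a slightly smaller gap. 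But even that is not a citation of the classical Beurling--Malliavin multiplier theorem: Poisson summability of the discrete data $\log W(\lan)$ on a positive-density sequence does not by itself yield such a multiplier (regularizing spikes of height $h_n$ into an admissible majorant costs width comparable to $h_n$, i.e.\ requires $\sum h_n^2/(1+\lan^2)<\infty$), and overcoming precisely this obstruction is what the energy condition \eqref{energy} and the technical work of \cite{GAP, Type, CBMS} exist for. The step you yourself flagged as the main obstacle is thus not an available lemma, and the lower bound collapses with it.

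The upper bound has a gap of the same nature. Applying $T^1_\mu=\GG_{\supp\mu}$ and Theorem \ref{MAINGAP} to $X=\supp\mu$ produces \emph{some} $d$-uniform sequence in the support, but that sequence knows nothing about $W$: finiteness of $\int W\,d|\mu|$ gives no pointwise control of $W$ on $\supp\mu$ (again $W$ may equal $+\infty$ on a $|\mu|$-null set, which a given sequence can hit), so the claimed ``entire majorant of $W$ along $\L$ of type at most $b$'' cannot be extracted, and the final appeal to the multiplier equivalence is vacuous. What is actually needed is to re-run the necessity argument choosing the points of the sequence where $W$ is controlled, via a Chebyshev-type estimate $\min_{\lan^*}W\leqslant\left(\int W\,d|\mu|\right)/|\mu|(\lan^*)$ combined with lower bounds on the local masses $|\mu|(\lan^*)$ coming from the annihilation condition --- exactly the device used in the proof of Theorem \ref{tLp} in this paper --- and this requires access to the internals of the gap theorem's proof, not merely its statement. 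In short, your frame (dualize, then compare with the gap characteristic) is correct, but in both directions the mechanism coupling the weight to the $d$-uniform sequence is missing, and that coupling is the actual content of the theorem.
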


As was shown by A. Bakan \cite{Bakan}, $\EE_a$ is complete in  $L^p(\mu),\ 1\leqslant p\leqslant \infty,$ iff there exists a
weight $W\in L^p(\mu)$ such that $\EE_a$ is complete in $C_W$. This result yields the following corollary from the last theorem.
For $\mu\in M^+_p$, we call a weight $W$ a $\mu$-weight if $$\int Wd\mu<\infty.$$

\begin{corollary}[\cite{Type}]\label{Typemain}
Let $\mu$ be a finite positive measure on the line. Let $1<p\leqslant\infty$ and $d>0$ be  constants.

  Then $T^p_\mu\geqslant  2\pi d$
if and only if for any  $\mu$-weight $W$  there exists
a $d$-uniform sequence $\L=\{\lan\}\subset \supp \mu$ such that
\begin{equation}\sum\frac{\log W(\lan)}{1+\lan^2}<\infty.\label{ur4}\end{equation}

\end{corollary}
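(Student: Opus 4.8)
The plan is to assemble the corollary from the two results just quoted---Bakan's reduction of $L^r(\mu)$-completeness to completeness in a weighted space $C_W$, and the formula for $\GG_W$ in Theorem~\ref{tBmain}. No new hard analysis should be needed; the work lies in the bookkeeping. First I would restate the $p$-type as a completeness threshold. By the duality remark in Section~\ref{sPrelim}, the existence of a nonzero $f\in L^p(\mu)$ annihilating $\PWa\cap L^q(\mu)$ (with $\frac1p+\frac1q=1$) is exactly the failure of density of $\PWa\cap L^q(\mu)$ in $L^q(\mu)$. Multiplying by the unimodular factor $e^{-iax}$ (an isometry of $L^q(\mu)$) identifies the closed span of $\PWa$ with that of $\EE_{2a}$, so $\PWa\cap L^q(\mu)$ is complete iff $\EE_{2a}$ is. Tracking the factor $2$ in the definition of $T^p_\mu$, this yields the clean statement
$$T^p_\mu=\inf\{b\ |\ \EE_b\textrm{ is complete in }L^q(\mu)\}.$$

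Next I would feed this into Bakan's theorem with exponent $q$: $\EE_b$ is complete in $L^q(\mu)$ iff there is a weight $W\in L^q(\mu)$ with $\EE_b$ complete in $C_W$, i.e.\ with $\GG_W\leqslant b$, whence $T^p_\mu=\inf\{\GG_W\ |\ W\in L^q(\mu)\}$. To trade the constraint $W\in L^q(\mu)$ (that is $\int W^q\,d\mu<\infty$) for the $\mu$-weight constraint $\int W\,d\mu<\infty$, I would use that, by Theorem~\ref{tBmain}, $\GG_W$ depends on $W$ only through the finiteness of the sums $\sum\log W(\lan)/(1+\lan^2)$, which is unchanged when $W$ is replaced by any positive power $W^s$. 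Since $W\mapsto W^q$ carries $\mu$-weights to elements of $L^q(\mu)$ and back without altering $\GG_W$, the two infima coincide and $T^p_\mu=\inf\{\GG_W\ |\ W\textrm{ a }\mu\textrm{-weight}\}$; for $p=\infty$ one has $q=1$ and no power is needed. Consequently $T^p_\mu\geqslant 2\pi d$ if and only if $\GG_W\geqslant 2\pi d$ for every $\mu$-weight $W$.

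It remains to rewrite $\GG_W\geqslant 2\pi d$ through Theorem~\ref{tBmain} and to force the sequences inside $\supp\mu$. For the latter I would extend each $\mu$-weight by $+\infty$ off $\supp\mu$: this keeps $W$ lower semicontinuous, since $W$ is only raised to $+\infty$ on the open complement of the support, and it does not change $\int W\,d\mu$. Then finiteness of $\sum\log W(\lan)/(1+\lan^2)$ forces $W(\lan)<\infty$ and hence $\lan\in\supp\mu$, so every admissible sequence automatically lies in $\supp\mu$. With this normalization, $\GG_W\geqslant 2\pi d$ is precisely the existence of a $d$-uniform $\L\subset\supp\mu$ with $\sum\log W(\lan)/(1+\lan^2)<\infty$, which is the assertion of the corollary.

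The one genuinely delicate point---and the step I expect to require the most care---is matching the supremum over $d'$ in Theorem~\ref{tBmain} with the exact value $d$ in the corollary. The inequality $\GG_W\geqslant 2\pi d$ a priori only yields $d'$-uniform sequences for every $d'<d$, whereas the statement asks for a $d$-uniform one. When $\GG_W>2\pi d$ this is handled by thinning a $d'$-uniform sequence with $d'>d$ down to asymptotic density exactly $d$ and checking that the energy/work condition \eqref{energy} survives the removal of points; the boundary case $\GG_W=2\pi d$ I would treat by a diagonal argument over a fixed short partition, extracting a $d$-uniform limit from the $d'$-uniform sequences as $d'\uparrow d$. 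The reverse implication is immediate, since a $d$-uniform sequence with finite energy sum shows $\GG_W\geqslant 2\pi d$ directly.
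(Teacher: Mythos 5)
Your proposal takes essentially the same route as the paper: the paper obtains this corollary precisely by combining Bakan's theorem with Theorem~\ref{tBmain} (offering no further detail), and your bookkeeping --- the duality/factor-of-two identification of $T^p_\mu$ as a completeness threshold, the power substitution exchanging $L^q(\mu)$-weights with $\mu$-weights (you state the direction of the map backwards, but the two-way correspondence $V\mapsto V^q$, $W\mapsto W^{1/q}$ is exactly right since $\log W\geqslant 0$ makes \eqref{ur4} power-invariant), the extension of $W$ by $+\infty$ off $\supp\mu$ to force $\L\subset\supp\mu$, and the thinning/diagonal treatment of the exact density $d$ --- is a correct implementation of that one-line argument. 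The only step requiring genuine work beyond the paper's citation is attaining the endpoint value $d$ from the supremum in Theorem~\ref{tBmain}; your sketch is sound there, since the energy deficit $\Delta_n^2\log|I_n|-E_n$ is a sum of nonnegative pair terms and hence only decreases under passage to subsequences (so thinning works), and the diagonal gluing over a fixed short partition can be carried out with Lemma~\ref{l-energy} to transfer the $d'$-uniform sequences to a common coarser partition before splicing them along fast-growing annuli.
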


Note that this statement implies $T^p_\mu=T_\mu$ for all $p>1$, the property mentioned in previous sections. We will need these statements to obtain a new version of the type formula in the next section.

\section{Type formula in $L^p$-settings}\label{sTypeL}

\begin{definition}
If $\L=\{\lan\}\subset\R$ is a discrete sequence of distinct points we denote by $\L^*=\{\lan^*\}$ the sequence
of closed intervals such that each $\lan^*$ is centered at $\lan$ and has the length
equal to one-third of the distance from $\lan$ to the rest of $\L$. Note that
then the intervals $\lan^*$ are pairwise disjoint.

\end{definition}

\begin{definition}
If $\L$ is a discrete sequence we will write that $D_1(\L)=d$ if there exists
a short partition on which $\L$ satisfies \eqref{density}.

\end{definition}

Note that if $\L$ satisfies \eqref{density} with some  $d$ on some short partition then the asymptotic density of $\L$ is $d$ and therefore
$\L$ cannot satisfy \eqref{density} with any other $d$ on a different short partition, which implies correctness of the last definition.

\begin{theorem}\label{tLp} Let $\mu\in M^+_p$. Suppose that $T_\mu<\infty$. Then
$$T_\mu=2\pi\max\{ d|\ \exists\ d\textrm{-uniform }\L=\{\lan\}\textrm{ such that }\sum\frac{\log\mu(\lan^*)}{1+n^2}>-\infty\}$$
if the set of such $d$ is non-empty and $T_\mu=0$ otherwise.
\end{theorem}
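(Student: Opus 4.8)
The plan is to prove that $T_\mu=2\pi d_0$, where $d_0=\sup\{d\mid \exists\,d\text{-uniform }\L,\ \sum_n(1+n^2)^{-1}\log\mu(\lan^*)>-\infty\}$, by establishing $2\pi d_0\le T_\mu$ and $2\pi d_0\ge T_\mu$ separately, in both cases reducing to Corollary \ref{Typemain}. Two routine reductions come first. Since $T_\mu=T_{\mu_0}$ whenever $\mu=(1+|x|^N)\mu_0$ with $\mu_0$ finite, and on the cell $\lan^*$ one has $\log\mu(\lan^*)-\log\mu_0(\lan^*)=N\log(1+|\lan|)+O(1)$, which contributes a convergent series against $(1+n^2)^{-1}$, I may assume $\mu$ is finite, so that $\mu$-weights exist. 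Second, any sequence entering the formula has positive density $d$, hence $\lan\asymp n/d$ and the weights $(1+n^2)^{-1}$ and $(1+\lan^2)^{-1}$ are comparable; I will use whichever is convenient. The cases $T_\mu=0$ and $d=0$ are handled by convention.

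The bridge between the two formulas is a stability lemma: if $\L=\{\lan\}$ is $d$-uniform and each $\lan$ is replaced by some $\lan'\in\lan^*$, then $\L'=\{\lan'\}$ is again $d$-uniform of the same density. Writing $\delta_n=\dist(\lan,\L\setminus\{\lan\})$, so that $\lan^*$ is centered at $\lan$ with radius $\delta_n/6$, the fact that $|\lan^*|=\delta_n/3$ forces $\tfrac23|\lambda_k-\lambda_j|\le|\lambda_k'-\lambda_j'|\le\tfrac43|\lambda_k-\lambda_j|$ for every pair; hence each logarithm in $E_n$ moves by at most $\log 2$ and $|E(\L'\cap I_n)-E_n|=O(\Delta_n^2)$, whose weighted sum converges by the shortness \eqref{long} of the partition. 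The density condition \eqref{density} survives because the cells are pairwise disjoint, so each partition endpoint lies in at most one cell and $\Delta_n$ changes by at most $2$. Using this I get $2\pi d_0\le T_\mu$: fix a $d$-uniform $\L$ witnessing $d$ and an arbitrary $\mu$-weight $W$; with $w_n=\int_{\lan^*}W\,d\mu$, a Chebyshev argument gives $\lan'\in\lan^*\cap\supp\mu$ with $0\le\log W(\lan')\le\log 2+\log^+w_n-\log\mu(\lan^*)$. The series $\sum_n(1+n^2)^{-1}\log^+w_n$ converges since $\sum_n w_n\le\int W\,d\mu<\infty$ keeps $w_n$ bounded, and $-\sum_n(1+n^2)^{-1}\log\mu(\lan^*)<\infty$ by hypothesis, so $\sum_n(1+(\lan')^2)^{-1}\log W(\lan')<\infty$. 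By the stability lemma $\L'\subset\supp\mu$ is $d$-uniform, and Corollary \ref{Typemain} gives $T_\mu\ge2\pi d$; taking the supremum yields $2\pi d_0\le T_\mu$.

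For the reverse inequality, which also proves attainment of the maximum, I would apply Corollary \ref{Typemain} at $d=T_\mu/2\pi$ (finite by hypothesis) to the single weight
\[
W_0(x)=\psi(x)+\sup_{0<s\le1}\frac{s^\beta}{(1+x^2)\,\mu(\bar B(x,s))},\qquad \beta>1,
\]
where $\psi\ge1$, $\psi\to\infty$ is a fixed $\mu$-integrable function and $\bar B(x,s)$ is the closed ball, so that $W_0\ge1$ is lower semicontinuous and tends to infinity. Comparing the supremum with dyadic scales and using $\mu(\bar B(x,2^{-j}))\ge\mu(Q)$ for $x$ in a dyadic interval $Q$ of length $2^{-j}$ gives $\int W_0\,d\mu\lesssim\sum_{j\ge0}2^{j(1-\beta)}<\infty$; here the factor $(1+x^2)^{-1}$ is exactly what tames the tails, while being polynomial it costs only a convergent series later. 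Corollary \ref{Typemain} now supplies a $d$-uniform $\L\subset\supp\mu$ with $\sum_n(1+\lan^2)^{-1}\log W_0(\lan)<\infty$. Taking $s=\delta_n/6$ (or $s=1$ for large gaps), so that $\lan^*=\bar B(\lan,\delta_n/6)$, yields
\[
\log\mu(\lan^*)\ \ge\ \beta\log\frac{\min(\delta_n,6)}{6}-\log(1+\lan^2)-\log W_0(\lan).
\]
Summing, the last two terms give finite series, and the first is bounded below because the energy condition \eqref{energy} controls the gaps: bounding $\Delta_n^2\log|I_n|-E_n$ below by the nearest-neighbour contributions $\sum_{\lambda_k\in I_n}\log(|I_n|/\delta_k)$ gives $\sum_n(1+\lan^2)^{-1}(\log(1/\delta_n))^+<\infty$. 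Hence $\sum_n(1+n^2)^{-1}\log\mu(\lan^*)>-\infty$, so $d_0\ge T_\mu/2\pi$ and the supremum is attained; the contrapositive at arbitrary $d>0$ also disposes of the case $T_\mu=0$.

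The main obstacle is the final estimate: converting the \emph{averaged} energy condition \eqref{energy}, indexed by the intervals $I_n$ of a short partition, into the \emph{pointwise} gap-summability $\sum_n(1+\lan^2)^{-1}(\log(1/\delta_n))^+<\infty$ indexed by the points $\lan$. This requires reconciling the two weightings $(1+\dist^2(0,I_n))^{-1}$ and $(1+\lan^2)^{-1}$ via $|I_n|=o(\dist(0,I_n))$ together with the density $d$, and checking that the nearest-neighbour terms genuinely dominate the excess $\Delta_n^2\log|I_n|-E_n$ modulo the errors already absorbed by shortness. The definitions of a $d$-uniform sequence and of the one-third cells $\lan^*$ are engineered to make precisely this translation, and the companion perturbation estimate in the stability lemma, go through.
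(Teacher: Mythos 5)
Your first direction ($2\pi d_0\le T_\mu$) is correct and is essentially the paper's own argument: a Chebyshev/minimum estimate on each cell $\lan^*$ produces points of $\supp\mu$ where an arbitrary $\mu$-weight is controlled by $C/\mu(\lan^*)$, and Corollary \ref{Typemain} finishes; your explicit stability lemma (moving each $\lan$ within $\lan^*$ preserves $d$-uniformity) is a useful addition, since the paper only asserts this implicitly. Your second direction is genuinely different from the paper's: the paper runs a contradiction argument with an extremal weight, an auxiliary weight built from $1/\mu(\lan^*)$ on the cells of a residual sequence, and the union Lemma \ref{l1}, whereas you apply Corollary \ref{Typemain} once, to a single explicit Frostman-type weight $W_0$. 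That weight is legitimate: $W_0\ge 1$ is lower semicontinuous, tends to infinity, and your dyadic computation giving $\int W_0\,d\mu\lesssim\sum_j 2^{j(1-\beta)}<\infty$ is correct.

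The gap is in the last step of that direction, exactly where you placed the weight of the argument. You claim $\Delta_n^2\log|I_n|-E_n\ \ge\ \sum_{\lambda_k\in I_n}\log\bigl(|I_n|/\delta_k\bigr)$, where $\delta_k=\dist(\lambda_k,\L\setminus\{\lambda_k\})$ is the \emph{global} nearest-neighbour distance (the one that defines $\lambda_k^*$). This is false: discarding all pair terms of $E_n$ except nearest neighbours only controls $\dist\bigl(\lambda_k,(\L\cap I_n)\setminus\{\lambda_k\}\bigr)$, which is $\ge\delta_k$, so the inequality you need runs the wrong way; and the energy condition \eqref{energy} is completely blind to a pair of points lying on opposite sides of a partition endpoint, because such a pair enters no $E_n$. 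Concretely, start from $\Z$ and, near each endpoint $a_n$ of a short partition, move the two integers adjacent to $a_n$ so that their mutual distance is $\exp(-e^{|a_n|})$: the result is still a $1$-uniform sequence, yet $\sum_k(\log(1/\delta_k))^+/(1+\lambda_k^2)=\infty$. Since Corollary \ref{Typemain} gives you no control over \emph{which} $d$-uniform sequence it returns for $W_0$, its cells $\lan^*$ may be arbitrarily tiny and $\sum\log\mu(\lan^*)/(1+n^2)$ may be $-\infty$ despite $\sum\log W_0(\lan)/(1+\lan^2)<\infty$; so the proof as written does not close. The gap is repairable by pruning: only the leftmost and rightmost points of each $\L\cap I_n$ can have their nearest neighbour outside $I_n$, so for each partition endpoint delete one point of the straddling pair whenever that pair is closer than the adjacent within-interval gaps. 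This removes at most two points per $I_n$, hence preserves \eqref{density} (as $|I_n|\to\infty$); it cannot increase $\Delta_n^2\log|I_n|-E_n$ (removing one point decreases it by at least $\log|I_n|$), hence preserves \eqref{energy}; the sum $\sum\log W_0(\lan)/(1+\lan^2)$ over a subsequence only decreases; and after pruning every surviving $\delta_k$ is bounded below by within-interval nearest-neighbour distances, which \eqref{energy} genuinely does control. With that step inserted your route works and is an attractive alternative to the paper's; without it, the key estimate fails.
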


Note that he maximum on the right-hand side exists whenever the set is non-empty. The maximal sequence can always be chosen inside the
support of the measure, which will be useful for us in applications.

The statement covers the case of finite type $T_\mu$, which is needed for Theorem \ref{tFrost} in the next section. We would like to leave it as an open problem whether the formula can be extended to the infinite case, i.e., if it is true that the equation (with supremum in
place of the maximum) holds when $T_\mu=\infty$.
It is not difficult to see that if the supremum of the set on the right is infinite then $T_\mu=\infty$. It therefore remains to check
the opposite implication.

\begin{proof} As follows from our discussion in Section \ref{sPrelim}, it is enough to prove the theorem for finite $\mu$.

Let $T_\mu=2\pi d<\infty$. One can show that then there exists  a $\mu$-weight $W$ such that \eqref{ur4} is satisfied for
some $d$-uniform sequence $\L$ but is not satisfied for any $d+\e$-uniform sequence.
Define
$$s=\max\{ d|\ \exists\ d\textrm{-uniform }\{\l_{n_k}\}\subset\L,\ \sum\frac{\log\mu(\l_{n_k}^*)}{1+{n_k}^2}>-\infty\}$$
(note that $\max$ can be used in the last formula instead of $\sup$).
Suppose that $s<d$. Let $\G$ be the maximal subsequence in the last formula.

Then
  the subsequence $\L_1=\{\l_{n_k}\}= \L\setminus \G$   satisfies $D_1(\L_1)=\e>0$ and  the corresponding sequence of intervals $\L^*_1=\{\l^*_{n_k}\}$ has the property that
  for any subsequence $ \L_2=\{\l_{n_{k_l}}\}\subset\L_1$, $D_1(\L_2)>0$,
$$\sum_l\frac{\log\mu(\l_{n_{k_l}}^*)}{1+{n_{k_l}}^2}=-\infty.
$$
 Indeed, if there existed $$\L_2\subset \L_1,\ D_1(\L_2)>0$$
for which the last sum were finite, then by Lemma \ref{lsubuniform}, $\G\cup\L_2$ would have an $(s+\e)$-uniform subsequence with finite sum,
which would contradict maximality of $\G$.

Now one can obtain a contradiction in the following way. Define a new weight $W_1$ to be equal to
$$\frac{\max(W(\l_{n_k}),1/\mu(\l^*_{n_k}))}{1+{n_k}^2}$$ on
each interval $\l^*_{n_k}$ from $\L^*_1$ and equal to $W$ elsewhere. Then $W_1$ is a $\mu$-weight and therefore \eqref{ur4} must be
satisfied with some $d$-uniform sequence $\Phi$. Notice that then $\Phi$
intersects the intervals from $\L_1^*$ only for a subsequence of density zero, i.e., there exists a subsequence $\Theta=\{\theta_n\}\subset \L_1$
such that $D_1(\Theta)=D_1(\L_1)=\e$ and
$$\Phi\setminus\cup_{\l_{n_k}\in\Theta}\ \l^*_{n_k}=\emptyset.$$
Since $\L_1$ is an $\e$-uniform sequence, $\Theta$ is an $\e$-uniform sequence.
By Lemma \ref{l1}, $\Phi\cup \Theta$ is a $(d+\e)$-uniform sequence on which the original weight $W$ satisfies \eqref{ur4}, which contradicts our choice of $W$. Hence, $s\geqslant d$.

In the opposite direction, suppose that there exists a $d$-uniform sequence $\L$ such that
$$\sum\frac{\log\mu(\lan^*)}{1+n^2}>-\infty.$$
Note that every $\mu$-weight $W$ satisfies
$$\min_{\lan^*} W(x)<\frac {C}{\mu(\lan^*)}$$
on every $\lan^*$, with $$C=\int Wd\mu.$$ The sequence of
points where the minima occur will give us a $d$-uniform sequence on which $W$ satisfies \eqref{ur4}. Thus
$T_\mu\geqslant 2\pi d$.
\end{proof}

\section{Type of Frostman measures}\label{sTypeF}

In this section we solve the problem of type for measures with bounded densities discussed in the introduction. As it turns out, our result
can be formulated for a broader class of Frostman measures, which seems
to be the right class for such a statement due to the elementary property
that $\log|I|^\alpha=\alpha\log|I|$.

Recall that a positive measure $\mu$ on $\R$.
is a Frostman measure if there exist positive constants $\alpha$ and $C$ such
that
\begin{equation}\mu((x-\epsilon,x+\epsilon))<C\epsilon^\alpha\label{eFrost}\end{equation}
for all $\epsilon>0, x\in \R$.
It easily follows that Frostman measures are Poisson-finite.

\begin{theorem}\label{tFrost}
If $\mu$ is a Frostman measure then $T_\mu$ equals either 0 or $\infty$.

\end{theorem}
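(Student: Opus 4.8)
The plan is to argue by contradiction using the type formula of Theorem \ref{tLp}. Suppose $\mu$ is Frostman with $0<T_\mu<\infty$, say $T_\mu=2\pi d$, $d>0$. By Theorem \ref{tLp}, together with the remark that the extremal sequence may be taken inside $\supp\mu$, there is a $d$-uniform sequence $\L=\{\lambda_n\}\subset\supp\mu$ realizing the maximum, i.e. with $\sum_n \frac{\log\mu(\lambda_n^*)}{1+n^2}>-\infty$. Write $p_n=\mu(\lambda_n^*)$ and $\ell_n=|\lambda_n^*|$; then $\sum_n \frac{-\log p_n}{1+n^2}<\infty$ and, by \eqref{eFrost}, $p_n\leqslant C\ell_n^\alpha$. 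The goal is to manufacture from $\L$ a sequence of density $2d$ that again satisfies the summability condition of Theorem \ref{tLp}. Exhibiting such a sequence contradicts the maximality of $d$ in the formula, so no positive $d$ can be admissible when $T_\mu<\infty$; thus $T_\mu<\infty$ forces $T_\mu=0$, and the stated dichotomy follows.

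The heart of the construction is a splitting step that uses the Frostman condition at every scale. For each $n$ I would replace the single point $\lambda_n$ by two points $a_n,b_n\in\supp\mu\cap\lambda_n^*$ chosen so that, with respect to $\L'=\bigcup_n\{a_n,b_n\}$, (i) the two new star intervals each carry mass at least $\kappa p_n$ for a fixed $\kappa=\kappa(\alpha)>0$, and (ii) $|a_n-b_n|\geqslant\kappa(p_n/C)^{1/\alpha}$. The point is that \eqref{eFrost} forbids the mass $p_n$ from concentrating: any window of radius $r$ carries at most $C(2r)^\alpha$, so collecting a fixed fraction of $p_n$ requires a window of radius $\gtrsim(p_n/C)^{1/\alpha}$, and the same estimate applied at the selected scale lets one locate two disjoint mass-rich windows whose centers are automatically separated by at least twice their common radius. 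Proving this splitting lemma — selecting the correct scale and extracting two well-separated, mass-carrying windows simultaneously — is the main obstacle; everything else is bookkeeping. A convenient relaxation: since only $\sum_n\frac{-\log(\text{star mass})}{1+n^2}<\infty$ is needed, it suffices to capture mass $\gtrsim p_n^{\beta}$ for any fixed $\beta>0$, which gives room in the selection.

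Granting the splitting, I would verify that $\L'$ is $2d$-uniform on the same short partition $\{I_m\}$. The density condition is immediate, since $a_n,b_n\in\lambda_n^*\subset I_m$, so each $\Delta_m$ doubles up to $o(|I_m|)$. For the energy condition I would compare $E_m'$ with $4E_m$: the cross terms $\log|a_n-\lambda_{n'}|$ differ from $\log|\lambda_n-\lambda_{n'}|$ by $O(1)$, because for $n'\neq n$ one has $|\lambda_n-\lambda_{n'}|\geqslant 3\ell_n$ while $|a_n-\lambda_n|\leqslant\ell_n$, contributing a total of $O(\Delta_m^2)$ per interval, which is summable against $1/(1+\dist^2(0,I_m))$ by shortness of $\{I_m\}$. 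The only genuinely new contributions are the intra-pair terms $-2\log|a_n-b_n|$, and by (ii) these are at most $\tfrac{2}{\alpha}(-\log p_n)+O(1)$, so they sum to a multiple of $\sum_n\frac{-\log p_n}{1+n^2}<\infty$. Hence $\sum_m \frac{\Delta_m'^2\log|I_m|-E_m'}{1+\dist^2(0,I_m)}<\infty$.

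Finally, for the new $\mu$-sum I would reindex $\L'$ as $\{\nu_m\}$ of density $2d$, so that the pair $(a_n,b_n)$ receives indices $m\approx 2n$. Using (i), each new star mass is $\geqslant\kappa p_n$, hence $-\log(\text{star mass})\leqslant -\log p_n+\log(1/\kappa)$, and since $1+m^2\gtrsim 1+n^2$ we obtain $\sum_m \frac{-\log(\text{star mass})}{1+m^2}\lesssim \sum_n\frac{-\log p_n}{1+n^2}+\sum_n\frac{\log(1/\kappa)}{1+n^2}<\infty$. Thus the $2d$-uniform sequence $\L'$ again satisfies the summability condition of Theorem \ref{tLp}, contradicting the maximality of $d$ and completing the argument. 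The decisive use of the Frostman hypothesis is confined to the splitting lemma, with the exponent $\alpha$ entering only through the scale bound $(p_n/C)^{1/\alpha}$, in keeping with the observation that the natural invariant is $\log|I|^\alpha=\alpha\log|I|$.
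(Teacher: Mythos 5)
Your overall strategy is exactly the paper's: assume $T_\mu=2\pi d$, take the extremal $d$-uniform sequence $\L$ from Theorem \ref{tLp}, use the Frostman bound \eqref{eFrost} to split each $\lambda_n$ into two separated points whose neighborhoods still carry mass with $\log$ comparable to $\log\mu(\lambda_n^*)$, and conclude that the doubled sequence is $2d$-uniform and still satisfies the summability condition, contradicting maximality. Your density/energy bookkeeping (cross terms $O(1)$, intra-pair terms controlled by $\sum(-\log|a_n-b_n|)/(1+n^2)$) is also what the paper uses. The gap is the splitting lemma itself, which you rightly call the main obstacle: \emph{it is false as stated}, both in the strong form (i) (each new star carries a fixed fraction $\kappa p_n$) and in the relaxed form (mass $\gtrsim p_n^\beta$). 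Counterexample with $\alpha=1$, $C=1$ (so \eqref{eFrost} says $\mu(B(x,\epsilon))\leqslant\epsilon$): fix small $\theta>0$, huge $L$, and on $[0,L]$ put mass $\theta(1-\theta)^jp$ uniformly on the block $R_j=[2^{-j-1}L,\,2^{-j}L]$, $j=0,1,\dots$, stopping when the density reaches $1/2$ and spreading the leftover mass uniformly near $0$. Any two adjacent blocks carry total mass $<2\theta p$, so a star of radius $r$ capturing $\geqslant 2\theta p$ must meet its rightmost block $R_j$ and also reach left of $R_{j+1}$; this forces $b+r>2^{-j-1}L$ and $b-r<2^{-j-2}L$, hence $r>2^{-j-3}L$, and then the partner star, whose right edge is $b-5r<2^{-j-2}L-4r<0$, captures nothing. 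So with $\theta=\kappa/2$ no admissible pair carries $\geqslant\kappa p$; moreover the richest pairs that do exist in this example carry only $\asymp\theta p$ or, deep inside, $\asymp p\,(p/L)^{c\theta}$, quantities that tend to $0$ as $L\to\infty$ with $p$ fixed, which kills any bound $cp_n^\beta$ that is uniform in $|\lambda_n^*|$.

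The correct splitting statement must degrade with $|\lambda_n^*|$, and that is what the paper proves: partition $\lambda_n^*$ into $M_n$ equal subintervals of length $\asymp(\Delta_n/6)^{1/\alpha}$, so by \eqref{eFrost} each carries mass $\leqslant\Delta_n/6$; pigeonhole then yields at least three subintervals of mass $\gtrsim\Delta_n/2M_n\asymp\Delta_n^{1+1/\alpha}/|\lambda_n^*|$, and two non-adjacent ones give the pair, with separation whose logarithm is $\gtrsim\log\Delta_n$. The factor $1/|\lambda_n^*|$ is harmless only because of an additional (tacit) estimate: $\sum\log^+|\lambda_n^*|/(1+n^2)<\infty$ for a sequence of positive density, since the disjoint intervals $\lambda_n^*$ inside a short partition cannot have logarithmically large lengths too often. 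Your bookkeeping, which assumes star mass $\geqslant\kappa p_n$, never needs this estimate and therefore never supplies it; once you replace (i)--(ii) by the paper's pigeonhole bound, that estimate becomes essential and must be added. So the proposal as written cannot be completed; it is repaired precisely by weakening the splitting lemma to the paper's form and inserting the $\log^+|\lambda_n^*|$ control.
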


\begin{proof} If $T_\mu=2\pi d,\ 0<d<\infty,$ then there exists a $d$-uniform sequence $\L$ such that $\L^*$ satisfies
\begin{equation}\sum\frac{\log\Delta_n}{1+n^2}>-\infty,\label{eTypeL}\end{equation}
where $\Delta_n=\mu(\lan^*)$.
 WLOG $\mu$ satisfies
\eqref{eFrost} with $C=1$ and some $\alpha,\ 0<\alpha\leqslant 1$. Divide the interval $\lan^*$ into $M_n$ equal subintervals,
so that
$$\frac 12 [\Delta_n/6]^{1/\alpha}\leqslant |\lan^*|/M_n \leqslant [\Delta_n/6]^{1/\alpha}.$$
Then the mass of each subinterval is at most $\Delta_n/6$ (and hence $M_n\geqslant 6$). Since the total mass is $\Delta_n$, there exist at least $3$ subintervals of mass at least
\begin{equation}\Delta_n/2M_n\asymp \Delta_n^{1+\frac 1\alpha}/|\lan^*|.\label{emass}\end{equation}
Let $l^n_1$ and $l^n_2$ be two of these intervals, not adjacent to each other. Then the distance $d_n$ between the intervals is greater or equal to
the length of each interval and its logarithm can be estimated
from below by
$$\log d_n\geqslant \log |\lan^*|/M_n\geqslant C_1\log\Delta_n +C_2,
$$
which together with \eqref{eTypeL}
implies 
\begin{equation}
\sum\frac{\log d_n}{1+n^2}>-\infty.\label{e00001}
\end{equation}
Consider the sequence $\G=\{\gamma_m\}$ composed of the centers of all such intervals $l^n_1, l^n_2$ for all $n$.
Since the sequence $\L$ was $d$-uniform and because of \eqref{e00001}, $\G$ is a $2d$-uniform sequence. Our construction  implies
that each of the intervals $\gamma^*_m$ contains one of the intervals  $l^n_1, l^n_2$, whose mass is at least the expression in \eqref{emass}.
Since $\Delta_n=\mu(\lan^*)$ satisfy \eqref{eTypeL}, the sequence of intervals $\{\gamma^*_m\}$ satisfies
$$\sum\frac{\log\mu(\gan^*)}{1+n^2}\gtrsim\sum\frac{\log\Delta_n}{1+n^2}+\const>-\infty$$
which implies $T_\mu\geqslant 4\pi d$ contradicting
our initial assumption.
\end{proof}

 As was discussed in the introduction, the last theorem has the following corollaries.

\begin{corollary}\label{c1}
Let $\mu=fm$, where $f\in L^p(\R),\ p>1$. Then $T_\mu$ is either 0 or infinity

\end{corollary}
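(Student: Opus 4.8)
The plan is to obtain Corollary \ref{c1} as an immediate consequence of Theorem \ref{tFrost}: I would verify that every density $f\in L^p(\R)$ with $p>1$ produces a Frostman measure $\mu=fm$, after which the dichotomy $T_\mu\in\{0,\infty\}$ follows verbatim from the theorem. Thus the whole proof reduces to a single local estimate on the masses $\mu(I)$ of arbitrary intervals. (Here I take $f\geqslant 0$ a.e., so that $\mu$ is a genuine positive measure and $T_\mu$ is defined in the sense of the preceding sections.)

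Concretely, I would fix an interval $I\subset\R$, write $q=p/(p-1)$ for the conjugate exponent, and apply H\"older's inequality:
$$\mu(I)=\int_I f\,dx\leqslant\left(\int_I f^p\,dx\right)^{1/p}\left(\int_I dx\right)^{1/q}\leqslant\|f\|_p\,|I|^{1/q}.$$
Setting $C=\|f\|_p<\infty$ and $\alpha=1/q=1-1/p$, the hypothesis $p>1$ guarantees $\alpha\in(0,1)$, so this is precisely the Frostman estimate \eqref{eFrost}. Hence $\mu$ is a Frostman measure, and Theorem \ref{tFrost} yields $T_\mu\in\{0,\infty\}$.

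I do not expect any genuine obstacle here, since all the substantive work has already been carried out in Theorem \ref{tFrost}; the only points requiring attention are bookkeeping. First, the endpoint $p=\infty$, if one wishes to include it, is handled even more directly: a bounded density satisfies $\mu(I)\leqslant\|f\|_\infty|I|$, i.e.\ \eqref{eFrost} with $\alpha=1$, as already observed in the introduction. Second, it is worth recording that the estimate degenerates exactly at $p=1$, where $\alpha=0$, which is why the hypothesis $p>1$ is sharp and cannot be relaxed: an $L^1$ density need not yield a Frostman measure. Indeed, the explicit example from the introduction of an absolutely continuous measure of finite positive type has a density that lies in $L^1$ but in no $L^p$ with $p>1$, confirming that the corollary fails at the borderline exponent.
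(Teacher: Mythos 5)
Your proof is correct and follows exactly the route the paper intends: Corollary \ref{c1} is presented there as an immediate consequence of Theorem \ref{tFrost}, with the Frostman property of $\mu=fm$ established by the same H\"older estimate given in the introduction (where, incidentally, the exponent is misprinted as $|I|^{1/p}$ rather than your correct $|I|^{1/q}$). Your additional remarks on the case $p=\infty$ and on sharpness at $p=1$ via the paper's example of finite positive type are accurate, though not needed for the statement itself.
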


\begin{corollary}
If $\mu$ is a doubling measure such that $$\sup_{x\in\R}\mu((x,x+1))<\infty$$ then $T_\mu$ is either 0 or infinity.

\end{corollary}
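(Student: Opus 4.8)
The plan is to reduce this corollary to Theorem \ref{tFrost} by showing that the two hypotheses force $\mu$ to be a Frostman measure; Theorem \ref{tFrost} then yields $T_\mu\in\{0,\infty\}$ at once. The bound $\sup_{x}\mu((x,x+1))=D<\infty$ already controls $\mu$ from above at unit scale: covering an arbitrary interval $I$ by at most $|I|+1$ translates of $(0,1)$ gives $\mu(I)\le(|I|+1)D$, so in particular $\mu((x-\epsilon,x+\epsilon))\le 2D$ for every $x$ and every $\epsilon\le 1/2$. The real content of the proof is to upgrade this uniform estimate to one that decays like a power of $\epsilon$, i.e.\ to produce constants $C',\alpha>0$ with $\mu((x-\epsilon,x+\epsilon))<C'\epsilon^{\alpha}$, which is exactly \eqref{eFrost}.

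I would first note that the doubling inequality, used by itself, only runs the wrong way for this purpose: iterating $\mu((x-2r,x+2r))<C\mu((x-r,x+r))$ downward from unit scale produces the lower bound $\mu((x-r,x+r))\gtrsim r^{\log_2 C}$, which says nothing about concentration. The decay must instead come from \emph{reverse doubling}, and the core step is to establish it: there exist $\lambda>1$ and $\theta>0$, independent of $x$, such that
$$\mu((x-\lambda r,x+\lambda r))\ge(1+\theta)\,\mu((x-r,x+r))$$
for all $x$ and all sufficiently small $r$.

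To prove reverse doubling on the line I would use that a doubling measure on the connected space $\R$ is non-atomic and has full support. Given the interval $(x-r,x+r)$, I locate a point $y$ with $|x-y|\asymp r$ such that the interval $(y-r/C_0,y+r/C_0)$ is disjoint from $(x-r,x+r)$ yet sits inside $(x-\lambda r,x+\lambda r)$; applying the doubling inequality a bounded (scale-independent) number of times to pass between intervals of comparable radius and comparable center shows $\mu((y-r/C_0,y+r/C_0))\ge\theta\,\mu((x-r,x+r))$. Since the two intervals are disjoint and both lie in $(x-\lambda r,x+\lambda r)$, adding their masses gives the displayed estimate. Iterating it from the unit scale downward yields $\mu((x-\lambda^{-k},x+\lambda^{-k}))\le(1+\theta)^{-k}\,\mu((x-1,x+1))\le 2D\,(1+\theta)^{-k}$, and by monotonicity in $\epsilon$ this is the desired power bound with $\alpha=\log_\lambda(1+\theta)$. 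Hence $\mu$ is Frostman and Theorem \ref{tFrost} applies.

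The step I expect to be the main obstacle is exactly this reverse-doubling estimate: the geometric bookkeeping on $\R$ that guarantees a disjoint interval of comparable mass inside the dilation, together with the verification that the comparable-center, comparable-radius comparison via the doubling inequality can be carried out uniformly in $x$ and down to arbitrarily small scales. The reductions surrounding it --- the unit-scale bound and the final iteration --- are routine.
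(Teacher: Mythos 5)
Your proof is correct and takes the same route as the paper: the paper obtains this corollary by combining Theorem \ref{tFrost} with the assertion, made in the introduction without proof, that a doubling measure satisfying $\sup_{x}\mu((x,x+1))<\infty$ is a Frostman measure, and your reverse-doubling argument supplies precisely that missing implication in the standard way (e.g.\ $y=x+2r$, $C_0=2$, $\lambda=3$, $\theta=C^{-3}$ works uniformly in $x$ and $r$). One caveat worth recording: your iteration yields $\mu((x-\epsilon,x+\epsilon))\le C'\epsilon^{\alpha}$ only for $\epsilon\le 1$, and this cannot in general be upgraded to all $\epsilon>0$ when $\alpha<1$, since a doubling measure with a unit-scale bound may grow linearly at large scales (a singular doubling measure is forced to have $\alpha<1$ locally, yet its growth can be linear); thus what you verify is the local form of \eqref{eFrost}, which is evidently the reading the paper intends, as its own example of the periodically extended Cantor measure likewise satisfies only this local form --- so the mismatch lies in the paper's literal definition, not in your argument.
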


\section{Additivity properties of type}\label{sAdd}

It is natural to ask if the type of a measure satisfies any additivity conditions, in general or in special cases.
Our first observation in this direction is that the inequality $T_{\mu+\nu}\lesssim T_\mu +T_\nu$ fails in general.
Indeed, let $\mu$ and $\nu$ be the restrictions of Lebesgue measure $m$ 
to $\R_+$ and $\R_-$ respectively. Then
$T_\mu=T_\nu=0$, because if $f\in L^2(\nu)\ (\in L^2(\mu))$ then $\widehat{f\nu}\ (\widehat{f\mu})$ belongs to
the Hardy space $H^2\ (\bar H^2)$ and cannot vanish on a set of positive measure, unless $f\equiv 0$.
On the other hand $T_{\mu+\nu}=T_{m}=\infty$.

Further examples of this type, without semi-bounded supports, can be obtained using Beurling's theorem and choosing $\mu$ to be the restriction
of $m$ to the union of odd dyadic intervals $[2^{2n+1},2^{2n+2}],\ n\in \Z$ and $\nu$ as the restriction to
even intervals. Then once again $T_\mu=T_\nu=0$ but $T_{\mu+\nu}=T_{m}=\infty$.

In the opposite direction, we obviously have $T_{\mu+\nu}\geqslant \max(T_\mu,T_\nu)$. However, 
the inequality $T_{\mu+\nu}\geqslant T_\mu +T_\nu$ does not hold in general. To construct an example, consider
$$\mu=\sum_{n=-\infty}^{\infty} \delta_n\textrm{ and }\nu=\sum_{n=-\infty}^{\infty} \delta_{n+e^{-|n|}}.$$
Since both measures are supported by $1$-uniform sequences, $T_\mu=T_\nu=2\pi$ by Theorem \ref{tLp} (in this simple case the type can be calculated directly without any advanced results). Since the
support of $\mu+\nu$ does not contain any $d$-uniform sequences with $d>1$, $T_{\mu+\nu}=2\pi$ by Theorem \ref{MAINGAP} or \ref{tLp}.

Nonetheless, the following 'splitting' property holds for measures of finite positive type.

\begin{theorem} Let $\mu\in M^+_p$, $T_\mu=2\pi d<\infty$. Let $c_1,\ c_2$ be non-negative constants such that $c_1+c_2=d$.
	Then there exists a closed set $X\subset\R$ such that the measures $\mu_1=\mu |_X$, $\mu_2=\mu-\mu_1$ satisfy
	$$T_{\mu_1}=2\pi c_1, \ \ \ \ T_{\mu_2}=2\pi c_2.$$
	
\end{theorem}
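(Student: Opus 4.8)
The plan is to construct the splitting set $X$ by partitioning the maximal $d$-uniform sequence furnished by Theorem \ref{tLp} into two subsequences of densities $c_1$ and $c_2$, and then to define $X$ so that $\mu_1=\mu|_X$ inherits one subsequence and $\mu_2$ inherits the other. By Theorem \ref{tLp} there exists a $d$-uniform sequence $\L=\{\lan\}$, which we may take inside $\supp\mu$, such that
\begin{equation}
\sum_n\frac{\log\mu(\lan^*)}{1+n^2}>-\infty.
\label{eplan1}
\end{equation}
The first step is to split $\L$ into two disjoint subsequences $\L_1,\L_2$ with $D_1(\L_1)=c_1$ and $D_1(\L_2)=c_2$; this can be done by distributing the points of $\L$ within each interval $I_n$ of the underlying short partition in the correct proportion, using the density asymptotics \eqref{density} and an energy bookkeeping to guarantee that each $\L_j$ remains $c_j$-uniform (here I would invoke the subsequence lemmas of Section \ref{sTypeL}, in the spirit of Lemma \ref{lsubuniform} and Lemma \ref{l1}). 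Since the starred intervals $\lan^*$ are pairwise disjoint, \eqref{eplan1} passes verbatim to each subsequence, so each $\L_j$ certifies $T_{\mu_j}\geqslant 2\pi c_j$ once we arrange that $\L_j\subset\supp\mu_j$.

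The second step is to build the carrier set $X$. For each $\lambda\in\L_1$ I would place a small closed neighborhood of $\lambda$ (for instance the inner third $\lambda^*$, or an even smaller interval carrying positive $\mu$-mass) into $X$, and for each $\lambda\in\L_2$ keep the corresponding neighborhood in the complement. The remaining mass of $\mu$ can be apportioned to $X$ or its complement arbitrarily—say, by splitting the ambient real line into the regions nearest $\L_1$ versus $\L_2$—so long as the two pieces stay disjoint and cover $\R$. This yields $\mu_1=\mu|_X$ supported on a closed set containing $\L_1$ and $\mu_2=\mu|_{\R\sm X}$ containing $\L_2$, giving the lower bounds $T_{\mu_1}\geqslant 2\pi c_1$ and $T_{\mu_2}\geqslant 2\pi c_2$ from the preceding paragraph.

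The third step, and the main obstacle, is the matching \emph{upper} bounds $T_{\mu_j}\leqslant 2\pi c_j$. A naive split could easily produce pieces of type larger than $c_j$, since a $c_j$-uniform sequence certifying a larger type might survive inside a single piece even after we thin $\L$. The natural tool is Corollary \ref{Typemain}: to show $T_{\mu_1}\leqslant 2\pi c_1$ I must exhibit, for a suitable $\mu_1$-weight $W_1$, the \emph{failure} of \eqref{ur4} for every $(c_1+\e)$-uniform sequence in $\supp\mu_1$. The idea is to manufacture $W_1$ to be enormous off the chosen neighborhoods of $\L_1$—large enough that any sequence dense enough to exceed density $c_1$ is forced to spend a positive-density portion of its points where $\log W_1$ is so large that \eqref{ur4} diverges—while keeping $W_1$ integrable against $\mu_1$ by exploiting the Frostman-type smallness of $\mu_1$-mass away from $\L_1$. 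Symmetrically one handles $\mu_2$. The delicate point is that the two weights must be constructed \emph{compatibly}, so that the total $d$-uniform obstruction of $\mu$ is exactly partitioned and not duplicated: one must verify, via the energy condition \eqref{energy}, that no $(c_1+\e)$-uniform sequence can borrow density from the $\L_2$-region without paying an infinite energy/weight penalty. Carrying out this simultaneous weight construction—and checking that it certifies both upper bounds against the \emph{same} underlying partition—is where the real work lies; the density and energy lemmas of Section \ref{sTypeL} should supply the combinatorial backbone, but the quantitative choice of $W_1,W_2$ is the step I expect to demand the most care.
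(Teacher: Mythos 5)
Your first two steps essentially reproduce the paper's construction: the paper also takes the $d$-uniform sequence $\L$ furnished by Theorem \ref{tLp}, extracts a subsequence $\Gamma\subset\L$ of density $c_1$ (the energy condition is inherited on the same short partition, so $\Gamma$ is $c_1$-uniform), and sets $X=\cup_{\lan\in\Gamma}\lan^*$, giving the lower bounds exactly as you do. But your third step --- the matching upper bounds --- is where the entire difficulty sits, and there you have a genuine gap: you do not prove $T_{\mu_j}\leqslant 2\pi c_j$, you only outline a plan, and the plan points in an unpromising direction. Producing a $\mu_1$-integrable weight $W_1$ for which \eqref{ur4} fails along \emph{every} sequence of density exceeding $c_1$ is, by Corollary \ref{Typemain}, just a restatement of the inequality $T_{\mu_1}\leqslant 2\pi c_1$; nothing has been reduced, and the ``quantitative choice of $W_1,W_2$'' you defer is the whole problem. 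Worse, the mechanism you propose for keeping $W_1$ integrable --- ``Frostman-type smallness of $\mu_1$-mass away from $\L_1$'' --- is not available: $\mu$ is an arbitrary measure in $M^+_p$ of finite positive type, and by Theorem \ref{tFrost} such a measure \emph{cannot} be Frostman, so that route is self-defeating. Finally, your decision to apportion the leftover mass of $\mu$ ``arbitrarily'' between $X$ and its complement is not harmless: extra mass placed in $X$ can by itself raise $T_{\mu_1}$, and it destroys the geometric separation on which the correct argument depends.

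The paper's upper bound avoids weights altogether and is a short contradiction argument through the mass formula of Theorem \ref{tLp} and the gluing Lemma \ref{l1}. Suppose $T_{\mu_1}=2\pi p>2\pi c_1$. Theorem \ref{tLp} yields a $p$-uniform sequence $\Phi$ which can be taken inside $X$ and whose starred intervals satisfy the mass condition for $\mu_1$ (hence for $\mu$); because $X$ is \emph{exactly} $\cup_{\lan\in\Gamma}\lan^*$, one can arrange that $\Phi$ and its starred intervals stay clear of the intervals $\lan^*$, $\lan\in\L\setminus\Gamma$. From the discarded part $\L\setminus\Gamma$ one manufactures a $c_2$-uniform sequence $\Psi$ by taking in each $\lan^*$, $\lan\in\L\setminus\Gamma$, the center of the half-interval carrying at least half of the $\mu$-mass of $\lan^*$; this keeps the mass condition, with \eqref{esum} controlling the relevant sums. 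Since $\Phi$ avoids neighborhoods of $\Psi$, Lemma \ref{l1} applies and $\Phi\cup\Psi$ contains a $(p+c_2)$-uniform sequence whose starred intervals still satisfy the mass condition for $\mu$, whence $T_\mu\geqslant 2\pi(p+c_2)>2\pi d$, contradicting the hypothesis; the bound $T_{\mu_2}\leqslant 2\pi c_2$ is symmetric. This ``borrow the complementary sequence and glue'' step is the key idea missing from your proposal, and it is also why the leftover mass must all be assigned to $\mu_2$ rather than split at will.
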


Note that the analog of the above statement for measures of infinite type is false. If $\mu$ is a measure with bounded density such that
$T_\mu=\infty$, $c_1$ is finite positive and $c_2$ is infinite then any restriction of $\mu$ will again be a measure
with bounded density and therefore, by Theorem \ref{tFrost}, will not have its type equal to $2\pi c_1$.

\begin{proof}
By Theorem \ref{tLp} there exists a $d$-uniform sequence $\L$ such that the sequence of intervals $\L^*$ satisfies 
\begin{equation}\sum\frac{\log\mu(\lan^*)}{1+n^2}>-\infty.\label{esum}\end{equation}
If $\{I_n\}$ is a short partition corresponding to $\L$ from the definition of $d$-uniform sequences, choose 
a subsequence $\Gamma\subset\Lambda$ satisfying \eqref{density} on $\{I_n\}$ with $c_1$ in place of $d$. Note
that the energy condition for $\Gamma$ will then be satisfied on the same partition and therefore $\Gamma$ is 
a $c_1$-uniform sequence. 

Put $X=\cup_{\lan\in\Gamma} \lan^*$ and $\mu_1=\mu |_X$. We claim that $\mu_1$ and $\mu_2=\mu-\mu_1$ are the desired measures.
First, notice that $T_{\mu_1}=c_1$. Indeed, since $\Gamma$ is a $c_1$-uniform sequence, $T_{\mu_1}\geqslant 2\pi c_1$ by Theorem \ref{tLp}.  If 
$T_{\mu_1}=2\pi p>2\pi c_1$ then there exists a $p$-uniform sequence $\Phi$ satisfying \eqref{esum} for $\mu_1$. As was remarked after 
Theorem \ref{tLp}, we can choose $\Phi$ from $X$. Moreover, we can assume that each interval $\lan^*,\ \lan\in \Gamma$ contains
at least one point from $\Phi$. Then the intervals from $\Phi^*$ do not intersect the intervals $\lan^*,\ \lan\in \L\setminus\Gamma$.

Divide each interval $\lan^*,\ \lan\in \L\setminus\G$ into two equal subintervals. Note that at least one of them has mass of at least
one half of the original interval, with respect to $\mu$. For each $n$ choose the half-interval with the larger mass and denote the centers of these half-intervals by $\psi_n,\ \psi_n\in\lan^*$.
Note that $\Psi=\{\psi_n\}$ is a $c_2$-uniform sequence. By Lemma \ref{l1}, the sequence $\Phi\cup\Psi$ is a $(p+c_2)$-uniform sequence.
By our construction, the sequence of intervals $(\Phi\cup\Psi)^*$ satisfies \eqref{esum}, which implies that $T_\mu\geqslant 2\pi (p+c_2)>2\pi d$, a contradiction.

Similarly, $T_{\mu_2}=2\pi c_2$. 
\end{proof}

\section{Growth of density in the case of finite positive type}\label{sGrowth}

As we saw from Corollary \ref{c1}, a measure of finite positive type cannot have bounded density. It is natural to ask how
fast should its density grow. To this account we prove the following statement.

\begin{theorem}\label{tGrowth}
Let $\mu\in M^+_p$, be absolutely continuous, $\mu=fm$. Suppose that $T_\mu=2\pi d$, $0<d<\infty$.
For $x>0$ denote
$$\MM_f(x)= \textrm{ess sup }_{[-x,x]} f(x).$$
Then 
$$\int_{0}^{\infty}\frac{\log(1+\MM_f(x))}{1+x^2}dx=\infty.$$ 

\end{theorem}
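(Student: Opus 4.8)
The plan is to argue by contradiction: assuming $\int_0^\infty\frac{\log(1+\MM_f(x))}{1+x^2}\,dx<\infty$, I will manufacture a $2d$-uniform sequence satisfying the condition of Theorem \ref{tLp}, which forces $T_\mu\ge 2\pi(2d)>2\pi d$ and contradicts $T_\mu=2\pi d$. First I would dispose of a trivial case: if $\ess\sup_{[-x,x]}f=\infty$ for some $x$, then $\MM_f\equiv\infty$ on a half-line and the integral diverges outright; so I assume $\MM_f(x)<\infty$ for every $x$. Since $0<T_\mu<\infty$, Theorem \ref{tLp} supplies a $d$-uniform sequence $\L=\{\lan\}$ with $\sum\frac{\log\mu(\lan^*)}{1+n^2}>-\infty$; as $\L$ has density $d>0$ we have $1+n^2\asymp 1+\lan^2$, so, writing $\Delta_n=\mu(\lan^*)$, $\ell_n=|\lan^*|$ and $\rho_n=\Delta_n/\ell_n$, the condition reads $\sum_n\frac{\log\Delta_n}{1+\lan^2}>-\infty$.

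The first key step converts the integral hypothesis into a discrete one. Put $r_n=|\lan|+\ell_n/2$, so that $\lan^*\subset[-r_n,r_n]$ and hence $f\le V_n:=\MM_f(r_n)$ a.e.\ on $\lan^*$; in particular $V_n\ge\rho_n$. Because $\MM_f$ is nondecreasing and $\L$ has density $d$, a dyadic comparison of the sum with the integral (on a block $\lan\in[2^k,2^{k+1})$ there are $O(d\,2^k)$ points, $V_n\le\MM_f(2^{k+2})$, and $\int_{2^{k+2}}^{2^{k+3}}\frac{dx}{1+x^2}\gtrsim 2^{-k}$) yields $\sum_n\frac{\log^+V_n}{1+\lan^2}\lesssim\int_0^\infty\frac{\log^+\MM_f}{1+x^2}\,dx+\const<\infty$, so that $\sum_n\frac{\log V_n}{1+\lan^2}<\infty$. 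A second, purely geometric, step records that the short partition $\{I_k\}$ underlying a $d$-uniform sequence forces $\ell_n=o(|\lan|)$ (the nearest-neighbour distance of $\lan$ is dominated by the lengths of a few adjacent $I_k$, which are short relative to their distance from the origin), whence $\sum_n\frac{\log^+\ell_n}{1+\lan^2}<\infty$. Combined with $\sum\frac{\log\Delta_n}{1+\lan^2}>-\infty$ this gives $\sum_n\frac{\log\rho_n}{1+\lan^2}=\sum_n\frac{\log\Delta_n-\log\ell_n}{1+\lan^2}>-\infty$.

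The main step is a splitting construction in the spirit of the proof of Theorem \ref{tFrost}, but using the \emph{local} bound $f\le V_n$ on $\lan^*$ in place of the global Frostman estimate. I divide each $\lan^*$ into $M_n=\lceil 12V_n/\rho_n\rceil\ (\ge 12)$ equal pieces; since $f\le V_n$, every piece has mass at most $V_n\ell_n/M_n\le\Delta_n/12$, so at least six pieces carry mass $\ge\Delta_n/(2M_n)$. Taking the first and last of these heavy pieces, $l^n_1,l^n_2$, their indices differ by at least five, so their mutual distance $d_n$ and their distances to the heavy pieces of neighbouring $\lan^*$ exceed three piece-lengths; letting $\gamma_m$ run through the centers of all the $l^n_i$, each interval $\gamma_m^*$ therefore contains the corresponding $l^n_i$ entirely. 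Hence $\mu(\gamma_m^*)\ge\Delta_n/(2M_n)\asymp\Delta_n\rho_n/V_n$ and $d_n\gtrsim\ell_n/M_n\asymp\Delta_n/V_n$, so $\sum\frac{\log d_n}{1+\lan^2}\ge\sum\frac{\log\Delta_n-\log V_n}{1+\lan^2}-\const>-\infty$. Consequently $\G=\{\gamma_m\}$ (two points per $\lan$) is $2d$-uniform, and
$$\sum_m\frac{\log\mu(\gamma_m^*)}{1+\gamma_m^2}\ \ge\ \sum_n\frac{\log\Delta_n}{1+\lan^2}+\sum_n\frac{\log\rho_n}{1+\lan^2}-\sum_n\frac{\log V_n}{1+\lan^2}-\const\ >\ -\infty,$$
each of the three sums being finite by the estimates above. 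By Theorem \ref{tLp} this forces $T_\mu\ge 2\pi(2d)>2\pi d$, the desired contradiction.

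The step I expect to be the main obstacle is the geometric bookkeeping in the splitting: one must ensure that the new intervals $\gamma_m^*$ genuinely capture the mass of the heavy pieces $l^n_i$ rather than a shrunken central portion, which is why I subdivide finely enough to produce at least six heavy pieces and then select the two extreme ones. This is precisely the point at which the local bound $f\le V_n$, together with the integral hypothesis controlling $\sum\frac{\log V_n}{1+\lan^2}$, plays the role that boundedness of the density plays in the proof of the Frostman case.
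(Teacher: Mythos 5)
Your proof is correct (at the level of rigor the paper itself adopts), but it takes a genuinely different route from the paper's. The paper never revisits the splitting construction: it forms the auxiliary measure $\nu=\mu/(1+\MM_f(|x|))$, which has bounded density, invokes Theorem \ref{tFrost} to conclude $T_\nu\in\{0,\infty\}$, and transfers type information between $\mu$ and $\nu$ through the weight criterion of Corollary \ref{Typemain}: since any $\nu$-weight $W$ yields a $\mu$-weight $U=W/(1+\MM_f)+1$, and the integral hypothesis gives $\sum_n\log(1+\MM_f(|\lan|))/(1+\lan^2)<\infty$ along any $d$-uniform sequence (the same sum-versus-integral comparison you carry out dyadically), a sequence witnessing \eqref{ur4} for $U$ also witnesses it for $W$; hence $T_\nu\geqslant 2\pi d>0$, so $T_\nu=\infty$, and running the transfer backwards through $2d$-uniform sequences gives $T_\mu\geqslant 4\pi d$ --- the same doubling contradiction you reach. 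You instead bypass both Theorem \ref{tFrost} and the Bernstein-weight machinery entirely, re-running the mass-splitting argument of the Frostman proof with the local bounds $V_n=\MM_f(r_n)$ in place of the global Frostman estimate, and using Theorem \ref{tLp} in both directions; the hypothesis enters only through $\sum_n\log^+ V_n/(1+\lan^2)<\infty$. What the paper's route buys is brevity and modularity (it reuses two already-proved results in a few lines); what yours buys is self-containment within the interval-mass formula of Theorem \ref{tLp}, no need to verify that the constructed weights are admissible, and a sharper, genuinely local insight: only the size of $\MM_f$ along the sequence $\L$ matters, which your auxiliary estimates ($\ell_n\lesssim 1+|\lan|$ on a short partition, hence $\sum\log^+\ell_n/(1+\lan^2)<\infty$ and the control of $\sum\log\rho_n/(1+\lan^2)$) make explicit. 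One caveat, which does not put you below the paper's own standard: your assertion that $\G$ is $2d$-uniform because $\sum\log d_n/(1+n^2)>-\infty$ is exactly the unproved step inside the paper's proof of Theorem \ref{tFrost} (justified there by the energy computation of Lemma \ref{l1} and its Claim), so your direct reliance on it is equivalent to the paper's indirect reliance via Theorem \ref{tFrost}.
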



\begin{proof} Suppose that
the integral from the statement is finite. 
 Consider the measure $\nu(x)=\mu(x)/(1+\MM_f(|x|))$.
This measure has bounded density and therefore has type 0 or infinity.
Note that for any  $\nu$-weight $W$, $U=W/(1+\MM_f) + 1$ is a $\mu$-weight.
Hence there exists a $d$-uniform sequence $\L$ such that $U$ satisfies \eqref{ur4}.
But
$$\log U(\lan)=\log W(\lan) - \log (1+\MM_f(|\lan|))$$
and 
$$\sum_n \frac{\log(1+\MM_f(|\lan|))}{1+\lan^2}\lesssim \int_0^\infty\frac{\log(1+\MM_f(|x|))}{1+x^2}<\infty.$$
Therefore $W$ satisfies \eqref{ur4} on the same sequence. 
Hence, by Corollary \ref{Typemain}, $T_\nu\geqslant 2\pi d$, which implies that $T_\nu=\infty$.

Therefore, for any $\nu$-weight $U$ there exists a $2d$-uniform sequence $\L$ which satisfies \eqref{ur4}.
Since any $\mu$-weight $W$ is equal to $U(1+\MM_f(|x|)$ for a $\nu$-weight $U$, similar to above we conclude
that $W$ satisfies \eqref{ur4} for a $2d$-uniform sequence $\L$. Thus by Corollary \ref{Typemain},  $T_\mu\geqslant 4\pi d$, a contradiction.
\end{proof}

Let us show that the condition of the last theorem is sharp in its scale. Let $M:\R_+\to [1,\infty)$ be an increasing function such that
$$\int_{0}^{\infty}\frac{\log M(x)}{1+x^2}dx=\infty.$$
Then there exists $\mu=fm$, $\MM_f(x)\leqslant M(x)$ such that $T_\mu$ is finite positive. Indeed, put $f$ to be equal
to $M(n)$ on every interval $$I_n=\left[n,n + \frac 1{M(n)}\right]$$ and zero elsewhere. One can show that
then every sequence $S\subset \Z$ such that
$$\sum_{n\in S}\frac{\log |I_n|}{1+n^2}>-\infty$$
has density zero. Hence, $\Z$ is the maximal $d$-uniform sequence from the statement of Theorem \ref{tLp} and 
$T_\mu=2\pi$.

\section{Additivity properties of uniform sequences}\label{sLem}

The following lemma was proved in \cite{GAP}.

\begin{lemma}\label{l-energy}
Let $\L$ be a sequence of real points and let $\{I_n\}$ be a short partition such that $\L$ satisfies
$$a|I_n|<\#(\L\cap I_n)$$
for all $n$ with some $a>0$ and the energy condition \eqref{energy} on $\{I_n\}$.
Then for any short partition $\{J_n\}$, there exists a subsequence $\G\subset\L$ which satisfies
$$\#(\L\sm\G)\cap J_n=o(|J_n|)$$
as $n\to\pm\infty$, and the energy condition \eqref{energy}
on $\{J_n\}$.

\end{lemma}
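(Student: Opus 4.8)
# Proof Proposal for Lemma \ref{l-energy}

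The plan is to leverage the fact that the energy condition \eqref{energy} is essentially a statement about how the points of $\L$ are locally distributed, and that this distribution quality can be \emph{inherited} by passing to an appropriate subsequence when we change partitions. The key conceptual point is that the energy term $E_n = E(\L \cap I_n)$ measures the logarithmic interaction of points within each block, and comparing $\Delta_n^2 \log|I_n| - E_n$ across the two partitions $\{I_n\}$ and $\{J_n\}$ reduces to controlling how the blocks overlap. First I would fix the target partition $\{J_n\}$ and analyze how each $J_n$ decomposes relative to the intervals $I_m$: since both are short partitions with lengths tending to infinity, each $J_n$ meets only finitely many $I_m$, and the overlap structure is controlled by the short-partition condition \eqref{long} on both sides.

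The main construction is to build $\G \subset \L$ by \emph{thinning} $\L$ within each $J_n$ so that the points we keep are well-separated, thereby making the energy $E(\G \cap J_n)$ close to its maximal value $\#(\G\cap J_n)^2 \log|J_n|$. Concretely, I would partition each $J_n$ into a bounded number of equal subintervals and, in each subinterval, discard enough points so that the surviving points in $\G$ have pairwise distances comparable to $|J_n|$ divided by the local count. The density hypothesis $a|I_n| < \#(\L\cap I_n)$ guarantees $\L$ is sufficiently rich inside each $J_n$ that we can afford to discard a vanishing proportion, i.e.\ arrange $\#(\L\sm\G)\cap J_n = o(|J_n|)$. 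The quantitative input here comes from the original energy condition on $\{I_n\}$: it forces the points of $\L$ to already be reasonably spread out on most blocks, so that only a sparse set of ``clustered'' points need to be removed.

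The heart of the estimate is to bound
\begin{equation}
\sum_n \frac{\#(\G\cap J_n)^2 \log|J_n| - E(\G\cap J_n)}{1+\dist^2(0,J_n)}
\end{equation}
by comparing it, term by term, against the corresponding sum over $\{I_n\}$ which is finite by hypothesis. For this I would split the logarithmic energy into contributions from pairs of points lying in the same $I_m$ versus pairs in different $I_m$; the same-block contribution is handled directly by the hypothesis, while the cross-block contribution is where one exploits the separation built into $\G$ together with the short-partition summability. A standard but delicate point is that removing points can only \emph{increase} the deficit $\Delta^2\log|I| - E$ unless done carefully, so the thinning must be chosen to preserve good separation rather than greedily; this interplay between density preservation and energy control is what the construction must balance.

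The hard part will be the cross-partition energy comparison: ensuring that the energy defect computed on $\{J_n\}$ does not blow up even though the blocks $J_n$ cut across several intervals $I_m$, each of which only carried a \emph{finite summed} defect. The technical obstacle is that logarithmic energy is nonlocal, so one cannot simply sum defects block-by-block; instead I expect to need a covering or regrouping argument showing that the discrepancy between the $\{I_n\}$-energy and the $\{J_n\}$-energy is itself a short-partition-type sum, controlled by \eqref{long} for both partitions. Once this comparison is established, the density condition $\#(\L\sm\G)\cap J_n = o(|J_n|)$ follows from the explicit thinning, and the energy condition \eqref{energy} for $\G$ on $\{J_n\}$ follows from the bounded term-by-term comparison, completing the proof.
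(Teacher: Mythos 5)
First, a point of comparison: the paper does not actually prove Lemma \ref{l-energy} --- it is quoted from \cite{GAP} --- so your proposal can only be judged on its own merits, and on those merits it has a genuine gap. The central construction --- thinning $\L$ inside each $J_n$ so that the surviving points ``have pairwise distances comparable to $|J_n|$ divided by the local count,'' i.e.\ making $E(\G\cap J_n)$ close to the maximal value $\#(\G\cap J_n)^2\log|J_n|$ --- is impossible to achieve while removing only $o(|J_n|)$ points. Consider a sequence consisting of tight pairs: in each block $I_n$ of a short partition place about $a|I_n|$ two-point clusters, the two points of each cluster at mutual distance $e^{-\sqrt{|I_n|}}$ and the clusters spaced $\sim 1/a$ apart. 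The intra-cluster pairs contribute $O\bigl(|I_n|(\log|I_n|+\sqrt{|I_n|})\bigr)$ to the deficit $\Delta_n^2\log|I_n|-E_n$ and the inter-cluster pairs contribute $O(|I_n|^2)$, so the energy condition \eqref{energy} holds on $\{I_n\}$ by shortness, and the density lower bound holds as well. Yet any subsequence $\G$ with $\#((\L\sm\G)\cap J_n)=o(|J_n|)$ must retain both members of all but a vanishing fraction of the tight pairs, so its gaps remain of size $e^{-\sqrt{\cdot}}$ throughout --- nowhere near $|J_n|/\#(\G\cap J_n)$. A correct proof must therefore carry the clustered points along and transfer their (summable) deficit from $\{I_n\}$ to $\{J_n\}$, not eliminate the clustering. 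Note also that your worry that ``removing points can only increase the deficit unless done carefully'' runs backwards: on a fixed block the deficit is a sum of non-negative terms $\log|I_n|-\log|\lambda_k-\lambda_j|\geqslant 0$ plus diagonal terms, so passing to any subsequence can only decrease it. Deletion is never dangerous on a fixed partition; its actual role is to control what happens when one $J_n$ swallows many $I_m$'s.

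Second, the step you explicitly defer (``I expect to need a covering or regrouping argument'') is precisely the heart of the lemma, and it is not routine. When $J_n$ contains several blocks $I_m$, a computation shows the new deficit is roughly the sum of the old deficits plus a term of order $(\#(\L\cap J_n))^2$, coming from cross-block pairs and from upgrading $\log|I_m|$ to $\log|J_n|$. If $\L$ had bounded upper density this extra term would be $O(|J_n|^2)$ and summable by shortness of $\{J_n\}$; but the hypotheses provide only a \emph{lower} density bound, so density spikes can make $(\#(\L\cap J_n))^2/(1+\dist^2(0,J_n))$ non-summable, and it is exactly to remove such spikes --- a set which has density zero thanks to the energy condition on $\{I_n\}$ --- that the subsequence $\G$ is needed at all. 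Your proposal neither identifies which points must be removed, nor proves that the removed set has density $o(|J_n|)$, nor carries out the term-by-term energy comparison. As written, it is a plan whose one concrete step fails on admissible examples and whose genuinely hard step is left open.
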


All $d$-uniform sequences satisfy the following simple properties.

\begin{lemma}\label{lsubuniform} Let $\L$ be a $d$-uniform sequence.

1) If $\Gamma\subset\L$ is a $c$-uniform sequence then $\L\setminus \Gamma$ contains a $(d-c)$-uniform subsequence.

2) if $\Gamma_1\subset\L$ is a $c_1$-uniform sequence and $\Gamma_2\subset\L$ is a $c_2$-uniform sequence, $\G_1\cap \G_2=\emptyset$,
then $\G_1\cup \G_2$ contains a $(c_1+c_2)$-uniform subsequence.

\end{lemma}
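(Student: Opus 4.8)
The plan is to deduce both parts from a single observation about the energy functional \eqref{energy}: on a short partition witnessing the $d$-uniformity of $\L$, \emph{every} subsequence of $\L$ inherits the energy condition automatically. Fix the short partition $\{I_n\}$ on which $\L$ satisfies \eqref{density} and \eqref{energy}, and rewrite the numerator in \eqref{energy} as
$$\Delta_n^2\log|I_n|-E_n=\Delta_n\log|I_n|+\sum_{\lambda_k\neq\lambda_j\in\L\cap I_n}\log\frac{|I_n|}{|\lambda_k-\lambda_j|}.$$
For all but finitely many $n$ we have $|I_n|\geqslant 1$, and any two points of $\L\cap I_n$ satisfy $|\lambda_k-\lambda_j|\leqslant|I_n|$, so every term on the right is non-negative. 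Hence for any subsequence $S\subset\L$ the corresponding quantity for $S\cap I_n$ is obtained by discarding points and pairs, and is dominated term by term:
$$\#(S\cap I_n)\log|I_n|+\sum_{\lambda_k\neq\lambda_j\in S\cap I_n}\log\frac{|I_n|}{|\lambda_k-\lambda_j|}\leqslant\Delta_n^2\log|I_n|-E_n.$$
Dividing by $1+\dist^2(0,I_n)$ and summing, the energy condition for $S$ on $\{I_n\}$ follows from that for $\L$. Thus for any candidate subsequence the only thing left to check is the density condition \eqref{density} on the partition $\{I_n\}$.

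For the density I would use that the asymptotic density of a sequence is independent of the short partition used to compute it (the remark following the definition of $D_1$): since $\Gamma$, $\Gamma_1$, $\Gamma_2$ are $c$-, $c_1$-, $c_2$-uniform, they satisfy \eqref{density} with these constants on $\{I_n\}$ as well. With this, both parts are immediate. For Part 2, $\Gamma_1\cup\Gamma_2\subset\L$ is a subsequence of $\L$, so it satisfies \eqref{energy} on $\{I_n\}$ by the observation above; disjointness gives $\#((\Gamma_1\cup\Gamma_2)\cap I_n)=\#(\Gamma_1\cap I_n)+\#(\Gamma_2\cap I_n)=(c_1+c_2)|I_n|+o(|I_n|)$, so \eqref{density} holds with $c_1+c_2$ and $\Gamma_1\cup\Gamma_2$ is itself $(c_1+c_2)$-uniform. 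For Part 1, $\L\setminus\Gamma\subset\L$ again inherits \eqref{energy} on $\{I_n\}$, while $\#((\L\setminus\Gamma)\cap I_n)=\Delta_n-\#(\Gamma\cap I_n)=(d-c)|I_n|+o(|I_n|)$ yields \eqref{density} with $d-c$, so $\L\setminus\Gamma$ is $(d-c)$-uniform.

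It is worth noting why the common ambient sequence $\L$ is essential: the energy step relies on $\Gamma_1\cup\Gamma_2$ and $\L\setminus\Gamma$ both lying inside a single $d$-uniform sequence, so that the cross-interactions between two subsequences are already controlled by the energy of $\L$ and never need to be estimated directly. Without this hypothesis the union of two $1$-uniform sequences need not contain any $2$-uniform subsequence, as the example $\Z$ and $\{n+e^{-|n|}\}$ in Section \ref{sAdd} shows; there the combined sequence fails \eqref{energy} precisely because of the uncontrolled close pairs, and the present monotonicity argument correctly does not apply.

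The one point requiring care is the compatibility of the density counts with the fixed partition $\{I_n\}$ of $\L$: one must know that $\Gamma$ (and $\Gamma_1,\Gamma_2$), uniform with respect to their own short partitions, satisfy \eqref{density} with the same constants on $\{I_n\}$. This is exactly the partition-independence of the asymptotic density recorded after the definition of $D_1$, and I expect it to be the only genuinely technical input; the energy half is immediate from the non-negativity and monotonicity displayed above. If one prefers to avoid invoking partition-independence directly, Lemma \ref{l-energy} can be used to transport the energy condition of $\Gamma_1,\Gamma_2$ (each of which has positive density, hence meets the lower count bound on its own partition) onto $\{I_n\}$ after deleting only a zero-density set, which leaves the counting density on $\{I_n\}$ unchanged and reduces everything to the computation above — this is also the source of the "contains a subsequence" phrasing in the statement.
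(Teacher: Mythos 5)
Your energy observation is correct, and it is essentially the same monotonicity that the paper's proof also relies on: on a fixed partition the numerator of \eqref{energy} is, for all but finitely many $n$, a sum of non-negative terms indexed by points and pairs of $\L\cap I_n$, so it can only decrease when points are deleted, and any subsequence inherits \eqref{energy} on the \emph{same} partition. The gap is in the density step. The remark after the definition of $D_1$ asserts only \emph{uniqueness} of the density constant: if a sequence satisfies \eqref{density} with $d$ on one short partition and with $d'$ on another, then $d=d'$. It does not assert that a $c$-uniform sequence satisfies \eqref{density} with $c$ on \emph{every} short partition, and that stronger transfer statement is false. Concretely: let $\{J_m\}$ be a short partition, let $\{I_n\}$ be the partition formed by the halves of the intervals $J_m$, let $\L=\tfrac12\Z$ (which is $2$-uniform on $\{I_n\}$, since its counts are $2|I_n|+O(1)$ and arithmetic progressions satisfy \eqref{energy} on any short partition), and let $\Gamma$ consist of the points of $\L$ lying in the left half of each $J_m$. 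Then $\Gamma$ is $1$-uniform, witnessed by $\{J_m\}$, but its counts on $\{I_n\}$ oscillate between $2|I_n|+O(1)$ and $0$, so $\Gamma$ satisfies \eqref{density} on $\{I_n\}$ with no constant at all. In this example your claimed count $\#\bigl((\L\setminus\Gamma)\cap I_n\bigr)=(d-c)|I_n|+o(|I_n|)$ is simply false: those counts also oscillate between $0$ and $2|I_n|+O(1)$. (The conclusion of the lemma still holds --- $\L\setminus\Gamma$ is $1$-uniform --- but it is witnessed by $\{J_m\}$, not by $\{I_n\}$, which is exactly what your argument cannot produce.) The same defect undermines the density count in your Part 2.

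Your proposed fallback does not repair this, because Lemma \ref{l-energy} transports only the \emph{energy} condition to a new partition (at the cost of a zero-density deletion); it gives no control of the counting function, and in the example above no deletion of a zero-density set can turn the oscillating counts of $\Gamma$ on $\{I_n\}$ into $c|I_n|+o(|I_n|)$. This is precisely why the paper's proof introduces a third short partition $\{L_n\}$, coarser than both $\{I_n\}$ and $\{J_n\}$ in the sense of \eqref{e3part}: the density condition \emph{does} transfer to a coarser partition (counts over $L_n$ aggregate those over the $I_m$'s and $J_m$'s with only $o(|L_n|)$ boundary errors), while Lemma \ref{l-energy} is used to move the energy condition of $\L$ onto $\{L_n\}$ after removing a set of density zero; your monotonicity observation then closes the argument on $\{L_n\}$. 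So the correct proof keeps your energy step but must replace $\{I_n\}$ by a common coarser partition --- working on $\L$'s own partition, as you do, cannot work.
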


\begin{proof}
1) Let $\{I_n\}$ and $\{J_n\}$ be the short partitions from the definition of $d$-uniform sequences in Section \ref{sGap} for the sequences $\L$ and $\G$ correspondingly.
Let $L_n$ be a third short partition with the property that
$$\max \{|I_m|\ :\ I_m\cap L_n\neq \emptyset\} + \max \{|J_m|\ :\ J_m\cap L_n\neq \emptyset\}=
$$
\begin{equation}=o(|L_n|)\textrm{ as }n\to\infty.\label{e3part}
\end{equation}
By Lemma \ref{l-energy} there is a subsequence $\L'\subset\L$ such that
$$\#((\L\setminus\L')\cap L_n)=o(|L_n|)$$
and $\L'$ satisfies the energy condition on $\{L_n\}$. Notice that then $\L'\setminus\G$ satisfies
the energy condition and the density condition with the constant $d-c$ on $\{L_n\}$.

2) Similarly to the last part, if $\{I_n\}$ and $\{J_n\}$ are short partitions corresponding to $\G_1$ and $\G_2$, choose a short partition $\{L_n\}$ to
satisfy \eqref{e3part}. Choose a subsequence $\L'\subset \L$ like in the last part. Note that since $\L'$ satisfies
the energy condition on $L_n$, so does $\L'\cap (\G_1\cup \G_2)$. The last sequence also satisfies the density condition with the constant
$c_1+c_2$ on $L_n$.
\end{proof}

A union of uniform sequences is a uniform sequence, up to a sequence of density zero, if the original sequences
are separated from each other in the following precise sense.

\begin{lemma}\label{l1}
Let $\L=\{\lan\}$  be a $c$-uniform sequence and let $\G=\{\gan\}$ be a  $d$-uniform sequence. Let
$\{I_n\}$ be a sequence of disjoint intervals such that  $I_n$ is centered at $\lan$, $$|I_n|\leq\frac 13\dist (\lan,\L\setminus\{\lan\})$$ and
$$\sum\frac{\log|I_n|}{1+n^2}>-\infty.$$
Suppose that $\G\cap(\cup I_n)=\emptyset$. Then $\L\cup\G$ contains a $(c+d)$-uniform subsequence.
\end{lemma}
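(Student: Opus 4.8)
The plan is to build the required subsequence by first transporting both $\L$ and $\G$ onto a single common short partition, then checking that the union has density $c+d$ there, and finally verifying the energy condition for the union; the only genuinely new ingredient will be the mutual (``cross'') energy between $\L$ and $\G$, which is where the separation hypothesis enters.

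First I would fix short partitions witnessing $c$-uniformity of $\L$ and $d$-uniformity of $\G$ and choose a third short partition $\{L_n\}$ dominating both in the sense of \eqref{e3part}, exactly as in the proof of Lemma \ref{lsubuniform}. By Lemma \ref{l-energy}, applied to each sequence, there are subsequences $\L'\subset\L$ and $\G'\subset\G$ whose complements meet each $L_n$ in $o(|L_n|)$ points and which satisfy the energy condition \eqref{energy} on $\{L_n\}$, with densities $c$ and $d$ respectively (the discarded density-zero parts affect none of the asymptotics below). Since $\lambda_n\in I_n$ and $\G\cap(\cup I_n)=\emptyset$, the sequences $\L$ and $\G$ are disjoint, hence so are $\L'$ and $\G'$; writing $\alpha_n=\#(\L'\cap L_n)$ and $\beta_n=\#(\G'\cap L_n)$ we get $\#((\L'\cup\G')\cap L_n)=\alpha_n+\beta_n=(c+d)|L_n|+o(|L_n|)$, which is the density condition \eqref{density} for the union. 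It then suffices to verify \eqref{energy} for $\L'\cup\G'$ on $\{L_n\}$.

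Setting $\Theta_n=\alpha_n+\beta_n$ and decomposing the logarithmic energy of $(\L'\cup\G')\cap L_n$ as $E_n=E_n^{\L'}+E_n^{\G'}+2C_n$ with cross energy $C_n=\sum_{\lambda\in\L'\cap L_n,\ \gamma\in\G'\cap L_n}\log|\lambda-\gamma|$, one has
$$\Theta_n^2\log|L_n|-E_n=(\alpha_n^2\log|L_n|-E_n^{\L'})+(\beta_n^2\log|L_n|-E_n^{\G'})+2(\alpha_n\beta_n\log|L_n|-C_n).$$
The first two brackets give convergent series after dividing by $1+\dist^2(0,L_n)$, since $\L'$ and $\G'$ satisfy \eqref{energy} on $\{L_n\}$. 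For the cross term, every pair lies in $L_n$, so $\alpha_n\beta_n\log|L_n|-C_n=\sum_{\lambda,\gamma}\log\frac{|L_n|}{|\lambda-\gamma|}\ge 0$, and I would estimate it scale by scale. Pairs with $|\lambda-\gamma|\ge 1$ contribute, after grouping $\gamma$ into dyadic annuli about each $\lambda$ and using the density $d$ of $\G$, a total of order $|L_n|^2$ per interval (the leading terms cancel by a Stirling-type telescoping), which is summable against $1/(1+\dist^2(0,L_n))$ since $\{L_n\}$ is short. For pairs with $|\lambda-\gamma|<1$ the separation hypothesis is essential: as $\gamma\notin I_m$ for $\lambda=\lambda_m$, we have $|\lambda_m-\gamma|\ge|I_m|/2$, so each such term is at most $\log(2|L_n|/|I_m|)=\log|L_n|+\log 2-\log|I_m|$. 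The $\log|L_n|$ part summed over $\lambda_m\in L_n$ is $O(\alpha_n\log|L_n|)=O(|L_n|^2)$ (summable by shortness, using $\log|L_n|\le|L_n|$ for large $|n|$), while the $-\log|I_m|$ part yields $\sum_m (1+\dist^2(0,L_{n(m)}))^{-1}\log(1/|I_m|)$, which by the density-$c$ correspondence $\dist(0,L_{n(m)})\asymp|\lambda_m|\asymp|m|/c$ is comparable to $\sum_m \frac{\log(1/|I_m|)}{1+m^2}<\infty$, finite by the hypothesis $\sum\frac{\log|I_m|}{1+m^2}>-\infty$. Collecting the three estimates gives \eqref{energy} for $\L'\cup\G'$, so $\L'\cup\G'$ is the desired $(c+d)$-uniform subsequence.

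The main obstacle is this cross-energy term, and within it the contribution of the $\gamma\in\G$ lying close to a given $\lambda_m$: this is exactly the place where the separation of $\G$ from the $I_m$ and the summability $\sum\frac{\log|I_m|}{1+m^2}>-\infty$ must be converted into a bound on $\sum_n (1+\dist^2(0,L_n))^{-1}(\alpha_n\beta_n\log|L_n|-C_n)$. Two technical points need care: reconciling the index-based weight $1/(1+m^2)$ of the hypothesis with the position-based weight $1/(1+\dist^2(0,L_n))$ of the energy condition (handled by the asymptotic density of $\L$), and controlling possible local clustering of $\G$ near a single $\lambda_m$, for which one invokes the energy condition already established for $\G'$ to bound the number of nearby $\gamma$'s on average. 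Everything else amounts to a routine check that passing to $\L'$, $\G'$ and to the common partition $\{L_n\}$ preserves the density, separation, and summability inputs.
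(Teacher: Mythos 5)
Your opening moves --- transporting $\L$ and $\G$ to a common short partition $\{L_n\}$ via Lemma \ref{l-energy}, checking the density $c+d$ for the union, and splitting the energy deficit as
$\Theta_n^2\log|L_n|-E_n=(\alpha_n^2\log|L_n|-E_n^{\L'})+(\beta_n^2\log|L_n|-E_n^{\G'})+2(\alpha_n\beta_n\log|L_n|-C_n)$
--- are sound and coincide with the start of the paper's proof. The gap is that neither of your two estimates for the cross term survives scrutiny, and the cross term is, as you say, the whole point. For the far pairs ($|\lambda-\gamma|\geqslant 1$) you invoke ``the density $d$ of $\G$'' inside dyadic annuli around each $\lambda$; but $d$-uniformity gives counting information only at the scale of the partition intervals, and inside a single $L_n$ both sequences may cluster arbitrarily, subject only to their energy conditions. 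If, for instance, essentially all of $\L\cap L_n$ and all of $\G\cap L_n$ sit in two blocks of length $|L_n|^{1/2}$ separated by $|L_n|^{1/2}$, the far-pair cross term is of order $|L_n|^2\log|L_n|$, not $O(|L_n|^2)$; it is harmless there only because the \emph{self} deficits of $\L$ and $\G$ are simultaneously that large. So no absolute per-interval bound exists: the cross energy can only be controlled by comparison with the two self energies, i.e., by a Cauchy--Schwarz type inequality between mutual and self energies. For the close pairs, your sum ``over $\lambda_m\in L_n$'' of $\log|L_n|+\log 2-\log|I_m|$ silently assumes each $\lambda_m$ has $O(1)$ points of $\G$ within distance $1$; with $N_m$ such points the needed bound is $\sum_m N_m\bigl(\log|L_n|+\log(1/|I_m|)\bigr)$, and the hypothesis controls only $\sum\log(1/|I_m|)/(1+m^2)$, without the factors $N_m$. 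You acknowledge this clustering issue and defer it to ``invoking the energy condition for $\G'$ to bound the number of nearby $\gamma$'s on average,'' but that deferred step is exactly the missing argument, and it cannot be done term by term: the $\G$-deficit produced by a cluster near $\lambda_m$ is of size roughly $N_m^2\log(|L_n|/\delta_m)$ where $\delta_m$ is the \emph{diameter of the cluster}, which may be vastly larger than $|I_m|$, so it does not dominate $N_m\log(1/|I_m|)$ directly.

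The paper gets around precisely this obstruction by a different mechanism, which is the idea your proposal is missing. It smears the point masses of $\L$ uniformly over the intervals $p_n=\frac12 I_n$ and those of $\G$ over auxiliary disjoint intervals $q_n$ (this is exactly where the separation $\G\cap(\cup I_n)=\emptyset$ and the hypothesis $\sum\log|I_n|/(1+n^2)>-\infty$ enter), encodes the smeared counting functions minus their linear parts into compactly supported functions $\phi_k,\psi_k$, and computes their Dirichlet norms: $\|\phi_k\|_{\DD}^2$ equals the $\L$-deficit up to $-\sum_{p_n\subset S_k}\log|p_n|+O(|S_k|^2)$, and similarly for $\psi_k$ and $\phi_k+\psi_k$. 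The cross-energy control then comes for free from the Hilbert-space inequality $\|\phi_k+\psi_k\|_{\DD}^2\lesssim\|\phi_k\|_{\DD}^2+\|\psi_k\|_{\DD}^2$; no sub-partition density information is ever needed, and the smearing errors are summable precisely because of the log-summability of $|p_n|$ and $|q_n|$. To make your direct, scale-by-scale approach rigorous you would have to prove an equivalent regularized Cauchy--Schwarz inequality for discrete logarithmic energies, which in effect reconstructs the paper's Dirichlet-space argument.
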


\begin{proof}
Denote by $p_n$ the intervals $$p_n=\frac 12 I_n.$$ Choose the intervals $q_n$ to be centered
at $\gan$ and such that
$$|q_n|=\frac 13 \dist (\gan, \cup I_n\cup\G\setminus\{\gan\}).$$
Note that then the intervals $q_n,p_n$ are  disjoint and satisfy
\begin{equation}\sum \frac{\log |p_n|}{1+n^2}>-\infty,\ \sum \frac{\log |q_n|}{1+n^2}>-\infty.\label{e100}\end{equation}

As follows from Lemma \ref{l-energy}, one can choose a short partition $\{S_n\}$ on which
both sequences $\L$ and $\G$ have subsequences which satisfy \eqref{energy} and \eqref{density} with constants $c$ and $d$
correspondingly. WLOG we will assume that the subsequences are equal to $\L$ and $\G$. We will also assume that
the endpoints of $S_n$ do not
fall into any of the intervals $p_n,q_n$ and that $$|S_n|=\frac 1c\#\L\cap S_n=\frac1d\#\G\cap S_n$$ (omitting the $o(|\cdot|)$ term in subsequent formulas).

For each $k\in\Z$ consider the functions $u_k$ and $v_k$ defined as follows. Each function $u_k$ is continuous and piecewise linear. It is zero on
$(-\infty,a_k)$, where $S_k=(a_k,b_k)$ and its derivative is zero outside of $(\cup p_n)\cap S_k$. On each $p_n, p_n\subset S_k$, $u_k$ grows linearly by one. It follows that $u_k=c|S_k|$ on $(b_k,\infty)$. Repeat the same construction for $v_k$ with the intervals $q_n$ in place of $p_n$.

Now define the functions $\phi_k$ as $u_k(x)-cx$ on $S_k$ and as 0 outside of $S_k$. Define $\psi_k$ as $v_k(x)-dx$ on $S_k$ and as 0 outside of $S_k$.
Employing some of the techniques used in \cite{GAP} we notice the following.

\begin{claim}
The functions $\phi_k$ and $\psi_k$ belong to the Dirichlet class $\DD$ with
$$||\phi_k||_{\DD}^2= \frac 1\pi (c^2|S_k|^2\log|S_k|-E_{\L\cap S_k})-\sum_{p_n\subset S_k} \log |p_n|+ O(|S_k|^2) ,$$
$$||\psi_k||_{\DD}^2= \frac 1\pi (d^2|S_k|^2\log|S_k|-E_{\G\cap S_k})-\sum_{q_n\subset S_k} \log |q_n|+ O(|S_k|^2) .$$
and
$$||\phi_k+\psi_k||_{\DD}^2= \frac 1\pi ((c+d)^2|S_k|^2\log|S_k|-E_{(\L\cup\G)\cap S_k})-
$$$$\sum_{p_n\subset S_k} \log |p_n|-
 \sum_{q_n\subset S_k} \log |q_n|+O(|S_k|^2) ,$$
as $k\to\infty$.
\end{claim}

\begin{proof}[Proof of Claim]
The function $\phi=\phi_k$ is a  function with bounded harmonic conjugate and compactly supported on $S_k=S$ derivative. Hence it belongs to $\DD$ and
its norm can be calculated as
$$||\phi ||_{\DD}^2=\int_\R \phi d\ti\phi= -\int_{\R}\ti \phi d\phi.$$
Integrating by parts,
$$-\int_{\R}\ti \phi d\phi=\frac 1\pi\int_{S}\phi'\left[\int_{S} \frac {\phi(t)dt}{t-x} \right] dx=$$
$$-\frac 1\pi\int_{S}\phi'\left[\int_{S} \log|t-x| \phi'(t)dt\right] dx.
$$
Let $p_1,p_2, ... p_n$ be the intervals from our construction above  inside $S$.
Recall that by our construction of $u=u_k$, $u'=1/|p_l|$ on each $p_l$ and to zero elsewhere. Since $\phi=u-cx$ we obtain
$$-\int_{S}\phi'\left[\int_{S} \log|t-x| \phi'(t)dt\right] dx=
-c^2\int_S\int_S \log|t-x| dtdx
$$$$
+2c^2\int_S\sum_{m=1}^n\frac 1{|p_m|}\int_{p_m}\log|t-x|dtdx
$$$$
-\sum_{m=1}^n\sum_{k=1}^m \frac 1{|p_m||p_k|}\int_{p_m}\int_{p_k}\log|t-x|dtdx=$$
$$
-I+II-III.$$
Recall that the points $\l_1,\l_2,...\lan$ are the centers of the intervals $p_1,p_2, ... p_n$. Because the distance between adjacent
intervals $p_l,p_{l+1}$ is at least $\max(|p_l|,|p_{l+1}|)$, for the last term we have
$$III= \sum_{1\leqslant m,k\leqslant n,\ m\neq k} \log|\l_m-\l_k| + \sum_{p_n\subset S_k}\log |p_n|+O(|S|^2).$$
Similarly,
$$II= 2c^2|S|^2\log |S| + O(|S|^2).$$
Finally, via elementary calculations,
$$I=c^2|S|^2\log|S|+O(|S|^2).$$
Combining the last three equations we obtain the statement for $\phi=\phi_k$. The equations for $\psi_k$ and  $\phi_k+\psi_k$ can be proved similarly.
\end{proof}

To finish the proof of the lemma, it remains to notice that the claim implies
$$\sum_k\frac{(c+d)^2|S_k|^2\log|S_k|-E_{(\L\cup\G)\cap S_k}}{1+k^2} +\const$$$$
\lesssim \sum_k ||\phi_k+\psi_k||_{\DD}^2+\const\lesssim \sum_k\left(
||\phi_k||_{\DD}^2+||\phi_k||_{\DD}^2\right)+\const
\lesssim $$$$\sum_k\frac{(c^2|S_k|^2\log|S_k|- E_{\L\cap S_k})+(d^2|S_k|^2\log|S_k|-E_{\G\cap S_k})}{1+k^2}+\const.$$
Since the sequences $\L$ and $\G$ are $c$- and $d$-uniform respectively on the partition $\{S_n\}$, the last sum is finite. Therefore
the sequence $\L\cup\G$  satisfies the energy condition \eqref{energy} on the partition $\{S_n\}$. Since it also satisfies  the density condition \eqref{energy} with $a=c+d$ on $\{S_n\}$, it is a $(c+d)$-uniform sequence.
\end{proof}


\begin{thebibliography}{24}


\bibitem{Bakan} {\sc  Bakan, A.} {\it Representation of measures with polynomial denseness in $Lp(\R, d\mu), 0 < p <
1$, and its application to determinate moment problems,} Proc. Amer. Math. Soc., 136
(2008), no. 10, 3579--3589


\bibitem{Benedicks}{\sc Benedicks, M.}{\it Fourier transforms of functions supported on sets of finite Lebesgue measure.}
J. Math. Anal. Appl. 106 Nl (1985) 180-183.

 \bibitem{Bernstein}
{\sc S. N. Bernstein,}  {\it  Le probleme de l'approximation des fonctions continues sur tout l'axe reel et
l'une de ses applications,}
Bull. Math. Soc. France, 52(1924), 399–410.


\bibitem{BS}{\sc Borichev, A., Sodin, M.}{\it Weighted exponential approximation and
non-classical orthogonal spectral measures},  Adv. in Math., 226, (2011) 2503-2545


\bibitem{Krein1} {\sc Krein, M. G.} {\it On an extrapolation problem of A. N. Kolmogorov,} Dokl. Akad. Nauk SSSR 46 (1945),
306--309 (Russian).


\bibitem{Krein2} {\sc Krein, M. G.} {\it On a basic approximation problem of the theory of extrapolation and filtration of stationary random processes,} Doklady Akad. Nauk SSSR (N.S.)  94,  (1954),  13--16 (Russian).

\bibitem{Krein3} {\sc Krein, M. G.} {\it On the transfer function of a one- dimensional boundary problem of the
second order} (Russian), Doklady Akad. Nauk SSSR (N.S.) 88 (1953), 405--408.

\bibitem{Khabibullin} {\sc  Khabibullin, B.} {\it Completeness of exponential systems and uniqueness sets.} Bashkir State Univ. Press, Ufa, 2006

\bibitem{Koosis} {\sc  Koosis, P.} {\it The logarithmic integral, Vol. I \ \& II,} Cambridge Univ. Press, Cambridge, 1988


\bibitem{Koosis2} {\sc Koosis, P.} {\it A local estimate, involving the least superharmonic majorant, for entire
functions of exponential type}, Algebra i Analiz 10 (1998), 45--64; English translation in St. Petersburg Math.
J. 10 (1999), no. 3, 441--455.

\bibitem{Levin2} {\sc Levin, B.} {\it Completeness of systems of functions, quasi-analyticity and subharmonic
majorants} (Russian), Issled. Linein. Oper. Teorii Funktsii, 17, Zap. Nauchn. Sem.
Leningrad. Otdel. Mat. Inst. Steklov (LOMI) 170 (1989), 102--156; English translation
in J. Soviet Math., 63 (1993), no. 2, 171--201.


 \bibitem{Mergelyan}
{\sc S. Mergelyan,}  {\it  Weighted approximation by polynomials,} Uspekhi mat. nauk, 11 (1956), 107-152,
English translation in Amer. Math. Soc. Translations, Ser 2, 10 (1958), 59-106


\bibitem{MiPo} {\sc Mitkovski, M. and Poltoratski, A.} {\it Polya sequences, Toeplitz kernels and gap theorems,}
Advances in Math., 224 (2010), pp. 1057-1070




 \bibitem{Lub}
{\sc D. S. Lubinsky}  {\it A Survey of Weighted Polynomial Approximation with Exponential
Weights,} Surveys in Approximation Theory, 3, 1–105 (2007)

\bibitem{GAP} {\sc Poltoratski, A.} {\it Spectral gaps for sets and measures}, Acta Math., 2012, Volume 208, Number 1, pp. 151-209.


\bibitem{Type}{\sc Poltoratski, A.},
   {\it Problem on completeness of exponentials},
    Ann. Math., 178, 983--1016, 2013

\bibitem{CBMS}{\sc  Poltoratski, A.} {\it Toeplitz Approach to Problems of the Uncertatinty Principle,}	book in CBMS seies, AMS/NSF, 2015















\end{thebibliography}
\end{document}